\def\XXint#1#2#3{{\setbox0=\hbox{$#1{#2#3}{\int}$ }
\vcenter{\hbox{$#2#3$ }}\kern-.6\wd0}}
\newcommand*{\rom}[1]{\expandafter\@slowromancap\romannumeral #1@}
\newcommand{\ind}{\protect\raisebox{2pt}{$\chi$}}
\newcommand{\prim}{\text{prim}}
\newcommand{\SL}{\mathrm{SL}}
\newcommand{\GL}{\mathrm{GL}}
\newcommand{\M}{\mathrm{M}}
\newcommand{\X}{\mathcal{X}}
\newcommand{\R}{\mathbb{R}}
\newcommand{\e}{\varepsilon}
\newcommand{\cF}{\mathcal{F}}
\newcommand{\Z}{\mathbb{Z}}
\newcommand{\Sphere}{\mathbb{S}}
\newcommand{\N}{\mathbb{N}}
\newcommand{\bthm}{\begin{thm}}
\newcommand{\ethm}{\end{thm}}
\newcommand{\bproof}{\begin{proof}}
\newcommand{\eproof}{\end{proof}}
\newcommand{\blem}{\begin{lem}}
\newcommand{\elem}{\end{lem}}
\newcommand{\brem}{\begin{rem}}
\newcommand{\erem}{\end{rem}}
\newcommand{\eeqn}{\end{equation}}
\newcommand{\eeqnn}{\end{equation*}}
\newcommand{\beqn}{\begin{equation}}
\newcommand{\beqnn}{\begin{equation*}}
\newcommand{\eprop}{\end{prop}}
\newcommand{\eexm}{\end{exm}}
\newcommand{\enexm}{\end{nexm}}
\newcommand{\ecor}{\end{cor}}
\newcommand{\bcor}{\begin{cor}}
\newcommand{\bexm}{\begin{exm}}
\newcommand{\bnexm}{\begin{nexm}}
\newcommand{\bprop}{\begin{prop}}
\newcommand{\bdefn}{\begin{defn}}
\newcommand{\edefn}{\end{defn}}
\newcommand{\benum}{\begin{enumerate}}
\newcommand{\eenum}{\end{enumerate}}
\newcommand{\cN}{\mathcal{N}}
\newcommand{\Mat}{\M_{m \times n}(\R)}
\newcommand{\supp}{\text{supp}}
\newcommand{\inj}{\operatorname{inj}}
\newcommand{\Lip}{\text{Lip}}
\title{The Doeblin-Lenstra conjecture: effective results and central limit theorems}
\begin{document}
\theoremstyle{plain}
\newtheorem{thm}{Theorem}[section]
\newtheorem{lem}[thm]{Lemma}
\newtheorem{prop}[thm]{Proposition}
\newtheorem{cor}[thm]{Corollary}
\newtheorem{question}{Question}
\newtheorem{con}{Conjecture}
\theoremstyle{definition}
\newtheorem{defn}[thm]{Definition}
\newtheorem{exm}[thm]{Example}
\newtheorem{nexm}[thm]{Non Example}
\newtheorem{prob}[thm]{Problem}

\theoremstyle{remark}
\newtheorem{rem}[thm]{Remark}

\author{Gaurav Aggarwal}
\address{\textbf{Gaurav Aggarwal} \\
School of Mathematics,
Tata Institute of Fundamental Research, Mumbai, India 400005}
\email{gaurav@math.tifr.res.in}

\author{Anish Ghosh}
\address{\textbf{Anish Ghosh} \\
School of Mathematics,
Tata Institute of Fundamental Research, Mumbai, India 400005}
\email{ghosh@math.tifr.res.in}

\date{}

\thanks{G.\ A.\ was an Infosys fellow during the writing of the paper. A.\ G.\ gratefully acknowledges support from a grant from the Infosys foundation to the Infosys Chandrasekharan Random Geometry Centre. G. \ A.\ and  A.\ G.\ gratefully acknowledge a grant from the Department of Atomic Energy, Government of India, under project $12-R\&D-TFR-5.01-0500$. This work was supported by a Royal Society International Exchanges Grant. Part of the work was done when G.\ A.\ was visiting the University of Bristol on a visit funded by this grant. The support of the grant and the hospitality of the University of Bristol are gratefully acknowledged. }

\subjclass[2020]{11J13, 11K60, 37A17, 60F05}
\keywords{Diophantine approximation, Doeblin-Lenstra conjecture, Central Limit Theorems, Flows on homogeneous spaces}


\begin{abstract}  

We establish effective convergence rates in the Doeblin--Lenstra law, describing the limiting distribution of approximation coefficients arising from continued fraction convergents of a typical real number. More generally, we prove quantitative versions of the Doeblin--Lenstra law for best approximates in higher dimensions, as well as for points sampled from fractal measures on the real line, including the middle-third Cantor measure.

Our method reduces the problem to proving effective convergence of Birkhoff averages for diagonal flows on spaces of unimodular lattices. A key step is to show that, despite the discontinuity of the observable of interest, it satisfies the regularity conditions on average required for effective ergodic theorems. For the fractal setting, we establish effective multi-equidistribution properties of self-similar measures under diagonal flow, extending recent work on single equidistribution by B\'enard, He and Zhang. As a consequence, we also obtain central limit theorems for these Diophantine statistics in both classical and fractal settings.
\end{abstract}

\maketitle

\tableofcontents
\section{Introduction}

Let $\theta \in \mathbb{R}$ be irrational, with continued fraction expansion
\[
\theta = a_0 + \frac{1}{a_1 + \frac{1}{a_2 + \frac{1}{a_3 + \cdots}},
}
\]
where $a_0 \in \mathbb{Z}$ and $a_j \in \mathbb{N}$ for $j \ge 1$. The $k$-th convergent of $\theta$ is
\[
\frac{p_k}{q_k} =  a_0+ \frac{1}{a_1 + \frac{1}{a_2 + \frac{1}{ \ddots + \frac{1}{a_k}}}},
\]
where $p_k$ and $q_k$ are coprime integers. The quantity
\[
q_k \big| \theta q_k - p_k \big|
\]
measures the quality of approximation of $\theta$ by its $k$-th convergent.

\subsection{The Doeblin-Lenstra Law}

In 1938, Wolfgang Doeblin outlined a remarkable limiting law for these approximation coefficients. He showed that for Lebesgue-almost every $\theta \in (0,1)$,
\[
\lim_{N \to \infty} \frac{1}{N} \sum_{k=1}^N F\bigl( q_k |\theta q_k - p_k| \bigr) = \int_0^1 F(z)\, d\nu(z)
\]
for any bounded continuous function $F : [0,1] \to \mathbb{R}$, where $\nu$ is the probability measure on $[0,1]$ with density
\[
\frac{d\nu}{dz} = 
\begin{cases}
\displaystyle \frac{1}{\ln 2}, & 0 \le z \le 1/2, \\[4pt]
\displaystyle \frac{1 - 1/z}{\ln 2}, & 1/2 < z \le 1.
\end{cases}
\]

Doeblin provided a sketch of the proof of this result, and it is difficult to reconstitute a complete proof from his paper. His work remained largely unnoticed until Hendrik Lenstra independently conjectured the same result decades later. The law is now known as the \emph{Doeblin-Lenstra law}. A full proof was provided in 1983 by Bosma, Jager, and Wiedijk~\cite{BJW}, using ergodic theory applied to the natural extension of the Gauss map.

Although the qualitative behavior is well understood, effective convergence rates for this law had remained out of reach. In this paper, we establish the first quantitative convergence rates:

\begin{thm}
\label{thm: intro 1}
Let $F : \mathbb{R} \to \mathbb{R}$ be differentiable with bounded derivative. Then for any $\varepsilon > 0$, for Lebesgue-almost every $\theta$,
\[
\frac{1}{N} \sum_{k=1}^N F\bigl( q_k |\theta q_k - p_k| \bigr) = \int_0^1 F(z)\, d\nu(z) + O\bigl( N^{-1/2} \log^{3/2 + \varepsilon} N \bigr).
\]
\end{thm}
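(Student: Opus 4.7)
The plan is to reduce the Birkhoff sum for the Gauss map to a Birkhoff integral for the diagonal flow on the space of unimodular lattices, and then apply an effective ergodic theorem with the optimal variance rate. Write $X_2 = \SL_2(\R)/\SL_2(\Z)$, $a_t = \diag(e^t, e^{-t})$ and $u_\theta = \begin{pmatrix}1 & \theta\\0&1\end{pmatrix}$. The lattice $a_t u_\theta \Z^2$ contains the vectors $(e^t(\theta q - p), e^{-t} q)$, and a standard dictionary identifies the continued-fraction denominators $q_k$ with the times $t_k = \log q_k$ at which the $a_t$-orbit of $u_\theta \Z^2$ meets a suitable Poincar\'e cross-section $\Sigma \subset X_2$. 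At such a visit the distinguished primitive vector equals $(q_k(\theta q_k - p_k), 1)$, so the approximation coefficient $q_k|\theta q_k - p_k|$ is recovered as the absolute value of its first coordinate; the first-return map to $\Sigma$ conjugates to the natural extension of the Gauss map, and the cross-section measure disintegrates to the density $d\nu$ on the right-hand side.

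With this encoding, I would lift $F$ to a function $\psi$ on $\Sigma$ and, via the Ambrose--Kakutani suspension representation, rewrite
\[\frac{1}{N}\sum_{k=1}^N F\bigl(q_k|\theta q_k - p_k|\bigr) = \frac{1}{T}\int_0^T \Phi(a_t u_\theta\Z^2)\, dt + O(N^{-1})\]
for an $\SL_2(\Z)$-invariant observable $\Phi$ on $X_2$ and $T = t_N \sim \tfrac{\pi^2}{12\log 2}\cdot N$ by L\'evy's theorem on the growth of $\log q_N$. The effective target would then be an a.e.\ bound of the form
\[\frac{1}{T}\int_0^T \Phi(a_t x)\, dt = \int_{X_2}\Phi\, d\mu + O\!\left(T^{-1/2}\log^{3/2+\e}T\right),\]
which follows, via a Gal--Koksma / Menchov--Rademacher summation argument, from exponential mixing of $a_t$ on $X_2$ (Kleinbock--Margulis) provided $\Phi$ is sufficiently Sobolev-regular.

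The principal obstacle is that $\Phi$ is \emph{not} smooth: it inherits jump discontinuities from the boundary $\partial\Sigma$, and $\Sigma$ itself reaches into the cusp of $X_2$, so a brutal smoothing at scale $\delta$ costs $O(\delta^{-1})$ in Sobolev norm and would destroy the $T^{-1/2}$ rate. The crucial input, and the heart of the proof, is that $\Phi$ satisfies the hypotheses of the effective ergodic theorem \emph{on average}: for a mollification $\Phi_\delta$ of $\Phi$, one shows that $\|\Phi - \Phi_\delta\|_{L^2(X_2)}$ is much smaller than the trivial bound because the discontinuity locus is a codimension-one hypersurface with controlled transverse measure and $F$ is Lipschitz, while the contribution of the cusp excursions $\{t : a_t x \text{ is high in the cusp}\}$ is controlled by a quantitative non-escape-of-mass estimate. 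Optimizing $\delta$ against $T$ and feeding $\Phi_\delta$ into the Gal--Koksma machine produces exactly the $N^{-1/2}\log^{3/2+\e}N$ rate; the $O(N^{-1})$ roof-function boundary error and the L\'evy conversion between $t_N$ and $N$ are negligible at this scale.
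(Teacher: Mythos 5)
Your high-level philosophy --- recast the sum as a Birkhoff average for the diagonal flow on the space of lattices and run an effective ergodic theorem on a discontinuous observable that is regular ``on average'' --- is the same as the paper's, but your execution goes through the cross-section/suspension construction, which is precisely the route the paper deliberately avoids (the introduction notes that cross-section methods ``do not readily yield quantitative rates''). The paper instead defines a \emph{bounded} observable directly on the space of lattices, $f(\Lambda)=\sum_{v\in S_\Lambda}(F\circ\phi)(v)$, summing over the uniformly finitely many locally best vectors with $1\le\|\pi_2(v)\|<e$, so that $\cF_F(\theta,T)$ equals the discrete Birkhoff sum $\sum_{i\le T}f(a_iu(\theta)\Gamma)$ up to a single term; it then normalizes by the effective count $\cN(\theta,T)=\gamma_0T+O(T^{1/2}\log^{3/2+\varepsilon}T)$ from the authors' effective L\'evy--Khintchine theorem. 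Your suspension observable $\Phi=\psi/r$ is by contrast unbounded (the return time is unbounded where $\Sigma$ enters the cusp), and your claim that the conversion between $t_N$ and $N$ is ``negligible'' is wrong in spirit: $|t_N-cN|$ fluctuates a.e.\ at order $N^{1/2}\log^{3/2+\varepsilon}N$, which is exactly the order of the error you are proving, so an effective L\'evy--Khintchine theorem must be invoked as a genuine input, not waved away.

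The more serious gap is the ergodic-theoretic input. You propose to feed the mollified observable into a Gal--Koksma argument powered by ``exponential mixing of $a_t$'' on $\SL_2(\R)/\SL_2(\Z)$. But the second moment you must estimate is $\int_0^1\Phi(a_su_\theta\Z^2)\,\Phi(a_tu_\theta\Z^2)\,d\theta$, an integral over the expanding horospherical parameter $\theta$ with Lebesgue measure, not a correlation integral over the homogeneous space with Haar measure. What is required is effective double (and, for the CLT, multiple) equidistribution of the translated arcs $\{a_tu_\theta x\}$ --- exactly the condition (EMEI) that the paper isolates as its central hypothesis and verifies for Lebesgue measure via Kleinbock--Margulis. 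This can be derived from mixing by a thickening argument, but it is a different statement and is where the real work sits; citing mixing alone leaves the proof incomplete. Relatedly, controlling $\|\Phi-\Phi_\delta\|_{L^2}$ does not by itself control $\frac1T\int_0^T(\Phi-\Phi_\delta)(a_tu_\theta\Z^2)\,dt$ for a.e.\ $\theta$: you need a variance or maximal estimate for the difference along the $\theta$-family, which again requires equidistribution input. The paper's substitute is a pointwise perturbation bound $|f(g\Lambda)-f(\Lambda)|\le\tau_\varepsilon(\Lambda)$ with $\int_\X\tau_\varepsilon\,d\mu_\X\ll\varepsilon$, computed explicitly via Siegel's and Rogers' mean value formulas and then fed into the abstract effective Birkhoff/CLT theorem of their earlier work, which is tailored to exactly this ``Lipschitz on average'' situation. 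Your outline identifies the right obstacles but does not supply the mechanisms that overcome them.
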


A central limit theorem will also be established; we defer its precise formulation to Section~\ref{sec: main result diophantine}. In this context, we note the interesting paper \cite{Knuth} of Donald Knuth, in which a related conjecture of Lenstra is settled. 

\subsection{Mahler’s Problem and Fractals}

In 1984, Mahler posed the fundamental question: What are the typical Diophantine properties of points sampled from sets like the middle-third Cantor set? His query led to a rich field of research. Recent advances include results on badly approximable numbers in fractals~\cites{SimmonsWeiss, AG24Random}, Khintchine-type theorems~\cites{KhalilLuethi, dattajana24, benard24}, and singular vectors on fractals~\cites{aggarwal2025, Khalilsing, AG24singular}.

In this direction, a natural extension of the Doeblin-Lenstra law is to ask whether it, and its quantitative form, hold with respect to measures supported on fractals. This was answered qualitatively by Shapira and Weiss~\cite{SW22}, and more generally by the present authors in~\cite{AG24Levy}, for a broad class of self-similar sets. However, the question of effective convergence rates on fractals remained open. We resolve this by proving:

\begin{thm}
\label{thm: intro 2}
Let $\mu$ be a non-atomic self-similar measure on $\mathbb{R}$ generated by similarities with a common contraction ratio (e.g., the Hausdorff measure on the middle-third Cantor set). Then for any differentiable $F : \mathbb{R} \to \mathbb{R}$ with bounded derivative and any $\varepsilon > 0$, for $\mu$-almost every $\theta$,
\[
\frac{1}{N} \sum_{k=1}^N F\bigl( q_k |\theta q_k - p_k| \bigr) = \int_0^1 F(z)\, d\nu(z) + O\bigl( N^{-1/2} \log^{3/2 + \varepsilon} N \bigr).
\]
\end{thm}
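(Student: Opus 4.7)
The plan is to lift the problem to the space of unimodular lattices $X = \SL_2(\Z) \backslash \SL_2(\R)$, using the standard dictionary between continued fraction convergents of $\theta$ and the diagonal flow $g_t = \diag(e^{t/2}, e^{-t/2})$ acting on the unipotent orbit $u(\theta) \Z^2$, where $u(\theta)$ is the upper-triangular unipotent matrix with $\theta$ in the $(1,2)$ entry. At the return times $t_k = t_k(\theta)$ of this orbit to a suitable cross-section $\Sigma \subset X$, the shortest vector of $g_{t_k} u(\theta)\Z^2$ encodes precisely the approximation coefficient $q_k|\theta q_k - p_k|$. One can therefore construct an observable $\Phi$ on $X$ (essentially $F$ composed with a shortest-vector functional, supported near $\Sigma$) and a time change such that
\[
\frac{1}{N} \sum_{k=1}^N F\bigl(q_k|\theta q_k - p_k|\bigr) = \frac{1}{T}\int_0^T \Phi(g_t u(\theta)\Z^2)\, dt + (\text{controlled remainder}),
\]
where $T = t_N(\theta) \asymp N$ (the return time to $\Sigma$ is integrable, and its linear growth is also effective in $N$). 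This reduces the theorem to an effective Birkhoff ergodic theorem for $(g_t)$ on $X$ along $\mu$-typical unipotent orbits, with rate $O(T^{-1/2}\log^{3/2+\varepsilon} T)$.

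For this I would use the classical variance / Gál--Koksma method. If the $L^2(\mu)$-variance of the length-$T$ time average of a mean-zero smooth $\tilde\Phi$ satisfies
\[
\int \biggl| \frac{1}{T} \int_0^T \tilde\Phi(g_t u(\theta)\Z^2)\, dt \biggr|^2 d\mu(\theta) = O(T^{-1}),
\]
then a Gál--Koksma summation argument upgrades this $L^2$ bound to a pointwise rate of $O(T^{-1/2}\log^{3/2+\varepsilon} T)$ for $\mu$-a.e.\ $\theta$, which is precisely the exponent in the statement. Expanding the square reduces the variance bound to uniform exponential decay of the pair correlation
\[
C(s,t) := \int \tilde\Phi(g_s u(\theta)\Z^2)\, \tilde\Phi(g_t u(\theta)\Z^2)\, d\mu(\theta) - \Bigl(\int_X \tilde\Phi\, dm_X\Bigr)^2,
\]
with explicit polynomial dependence on Sobolev norms of $\tilde\Phi$. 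This is exactly the effective \emph{multi}-equidistribution of the self-similar measure $\mu$ under the diagonal flow, extending the single-equidistribution result (one copy of $g_t$) of B\'enard--He--Zhang. Establishing this effective pair-correlation estimate is the principal technical obstacle of the proof.

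The remaining issue is that the natural observable $\Phi$ is not smooth: the shortest-vector functional is only continuous, and $\Phi$ has jumps along $\partial\Sigma$. To bridge from the smooth effective theorem to the version needed here, I would approximate $\Phi$ from above and below by Lipschitz functions that agree with $\Phi$ outside a thin neighbourhood of the discontinuity set, and control the error via an effective non-escape estimate: on average over $\theta\sim\mu$, the orbit $g_t u(\theta)\Z^2$ spends only a small and quantifiable amount of time near the discontinuity locus of $\Phi$. Such non-concentration estimates are available in the self-similar setting via Margulis-function arguments, and are in fact weak consequences of the effective multi-equidistribution above. This is the precise sense in which the observable satisfies the regularity conditions \emph{on average} alluded to in the abstract, and any extra logarithmic losses incurred can be absorbed into the $\varepsilon$ in the $\log^{3/2+\varepsilon}$ factor, completing the argument.
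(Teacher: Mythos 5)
Your skeleton matches the paper's strategy at the level of its introduction: lift to the space of lattices, reduce to an effective Birkhoff theorem for the diagonal flow along $u(\theta)\Gamma$ with $\theta\sim\mu$, identify (a) effective multi-equidistribution of the self-similar measure and (b) average regularity of the discontinuous observable as the two key inputs, and use a G\'al--Koksma-type upgrade from an $L^2(\mu)$ variance bound to the almost-sure rate $O(N^{-1/2}\log^{3/2+\varepsilon}N)$. The problem is that both load-bearing steps are left unproved, and they are exactly the paper's two main technical contributions. For (a), you declare the effective pair-correlation estimate for $\mu$ to be ``the principal technical obstacle'' and offer no argument. The paper proves it (Theorem \ref{thm: Effective Multi-equidistribution of fractal measure}) by a renormalization specific to similarities with a common contraction ratio: the identity $\hat a^{\,l}\,u(\kappa_{e_1}\circ\cdots\circ\kappa_{e_l}(\theta))=\Xi(e_l\cdots e_1)\,u(\theta)\,e_l\cdots e_1$ trades $l$ units of diagonal flow for $l$ steps of the IFS, after which the single-equidistribution input of B\'enard--He--Zhang, a non-escape-of-mass estimate along the random walk (Lemma \ref{lem: escape of mass}), and an induction on the number of correlated times yield all $r$-point correlations with exponential decay in $D(t_1,\dots,t_r)$. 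Without some such mechanism your variance bound is an assumption, not a proof.

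For (b), your proposed control of the discontinuity is misdirected: the discontinuity locus of the observable is the boundary of the ``best approximation'' condition, a positive-codimension subset of $\X$ unrelated to the cusp, so Margulis-function/non-divergence arguments do not bound the time spent near it. The paper instead bounds the perturbation $|f(g\Lambda)-f(\Lambda)|$ pointwise by an unbounded Siegel-transform-type majorant $\tau_{\e}$ built from pairs of primitive vectors of $\Lambda$ (Lemma \ref{lem: perturbed difference}), and shows $\int_\X\tau_\e\,d\mu_\X\ll\e$ via Siegel's mean value theorem and Rogers' second-moment formula; this ``Lipschitz on average with respect to $\mu_\X$'' condition is precisely the hypothesis of the abstract effective ergodic theorem of \cite{AG25}, which then combines with (EMEI) to handle $\mu$-typical orbits. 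A further structural point: the paper deliberately avoids your cross-section/return-time formulation (which it notes does not readily yield rates), instead defining $f(\Lambda)=\sum_{v\in S_\Lambda}(F\circ\phi)(v)$ so that the integer-time Birkhoff sum $\sum_{i\le T}f(a_iu(\theta)\Gamma)$ equals the Diophantine sum over best approximates with $\|q\|<e^T$ exactly; the only return-time input needed is the effective count $\cN(\theta,T)=\gamma_0T+O(T^{1/2}\log^{3/2+\varepsilon}T)$ from \cite{AG25} to convert between $T$ and $N$.
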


\subsection{Higher-Dimensional Analogues}

In dimension one, the partial quotients (or convergents) of a continued fraction expansion correspond precisely to best approximations of the real number. This observation provides a natural framework for generalizing the Doeblin-Lenstra law: in higher dimensions, one studies best approximations and the associated error terms. Specifically, for $\theta \in M_{m \times n}(\mathbb{R})$, the analogue of the quantity $q_k |\theta q_k - p_k|$ is
\[
\| q \|_{\R^n}^n \| p + \theta q \|_{\R^m}^m
\]
where $(p, q) \in \mathbb{Z}^m \times (\mathbb{Z}^n \setminus \{0\})$ is a best approximation to $\theta$.

Formally we fix norms $\|\cdot\|_{\R^m}$ on $\R^m$ and $\|\cdot\|_{\R^n}$ on $\R^n$. Then $(p,q)$ is called a best approximation of $\theta \in \Mat$ if there is no other $(p', q') \in \mathbb{Z}^m \times (\mathbb{Z}^n \setminus \{0\})$, other than $(\pm p, \pm q)$, satisfying
\[
\| p' + \theta q' \|_{\mathbb{R}^m} \le \| p + \theta q \|_{\mathbb{R}^m} \quad \text{and} \quad \| q' \|_{\mathbb{R}^n} \le \| q \|_{\mathbb{R}^n}.
\]

It was shown in~\cite{SW22} (for $n=1$) and~\cite{AG24Levy} (general $m,n$) that there exists a measure $\nu_{m,n}$ (depending on chosen norms $\|\cdot\|_{\R^m}$ and $\|\cdot\|_{\R^n}$) such that for Lebesgue-almost every $\theta$,
\[
\lim_{N \to \infty} \frac{1}{N} \sum_{k=1}^N F \bigl( \| q_k \|^n \| p_k + \theta q_k \|^m \bigr) = \int_0^1 F(z)\, d\nu_{m,n}(z)
\]
for any bounded continuous $F$. In this paper, we prove the first quantitative refinement of this convergence:

\begin{thm}
\label{thm: intro 3}
Let $m,n \ge 1$ and let $\nu_{m,n}$ be as above. Let $F : \mathbb{R} \to \mathbb{R}$ be differentiable with bounded derivative. Then for any $\varepsilon > 0$, for Lebesgue-almost every $\theta \in M_{m \times n}(\mathbb{R})$,
\[
\frac{1}{N} \sum_{k=1}^N F \bigl( \| q_k \|_{\R^n}^n \| p_k + \theta q_k \|_{\R^m}^m \bigr)
= \int_\R F(z)\, d\nu_{m,n}(z) + O\bigl( N^{-1/2} \log^{3/2 + \varepsilon} N \bigr).
\]
\end{thm}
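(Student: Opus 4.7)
The plan is to reduce the Diophantine sum to a Birkhoff average for the diagonal flow $g_t = \diag(e^{t/m} I_m, e^{-t/n} I_n)$ on the space $X_{m+n} = \SL_{m+n}(\R)/\SL_{m+n}(\Z)$ of unimodular lattices, evaluated along the orbit of $u_\theta \Z^{m+n}$, where
\[
u_\theta = \begin{pmatrix} I_m & \theta \\ 0 & I_n \end{pmatrix}.
\]
Via a Dani-type correspondence, one sets up a cross-section $\Sigma \subset X_{m+n}$ whose successive returns by $(g_t u_\theta \Z^{m+n})_{t \ge 0}$ are in bijection with the best approximations $(p_k,q_k)$ of $\theta$: at the $k$-th return time $t_k$, a distinguished primitive lattice vector of $g_{t_k} u_\theta \Z^{m+n}$ encodes $(p_k, q_k)$, and since the product $\|q_k\|_{\R^n}^n \|p_k + \theta q_k\|_{\R^m}^m$ is $g_t$-invariant, it is realized as a continuous function $\psi$ of this vector, i.e.\ a continuous function on $\Sigma$.

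With this in hand, I would express the sum as
\[
\sum_{k=1}^N F\bigl(\|q_k\|_{\R^n}^n \|p_k + \theta q_k\|_{\R^m}^m\bigr) \;=\; \int_0^{T_N} \phi(g_t u_\theta \Z^{m+n})\, dt,
\]
where $\phi$ is essentially $F\circ \psi$ spread over a thin transverse tube around $\Sigma$ and normalized so that each crossing contributes exactly one summand, and $T_N$ is the time of the $N$-th return. An effective Birkhoff ergodic theorem for $g_t$ at Lebesgue-a.e.\ starting point $u_\theta \Z^{m+n}$ then yields
\[
\frac{1}{T}\int_0^T \phi(g_t u_\theta \Z^{m+n})\,dt \;=\; \int_{X_{m+n}} \phi\, dm \;+\; O\bigl(T^{-1/2} \log^{3/2+\varepsilon} T\bigr),
\]
with the rate coming from exponential mixing of $g_t$ in Sobolev norms combined with a Rademacher--Menshov--Gal--Koksma-type argument to upgrade $L^2$ rates to pointwise ones. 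Applying the same effective theorem to the return-counting observable (another Birkhoff average with the same error) allows one to replace $T_N$ by $N$, and a direct computation identifies the properly normalized Haar integral of $\phi$ with $\int F\, d\nu_{m,n}$.

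The main obstacle, and the technical heart of the argument, is that $\phi$ is genuinely discontinuous across $\partial \Sigma$ and lies in no classical Sobolev space to which the effective Birkhoff theorem applies out of the box. To handle this I would sandwich $\phi$ between smooth approximants $\phi_\delta^\pm$ that coincide with $\phi$ outside a $\delta$-neighborhood of $\partial \Sigma$, and control both $\int(\phi_\delta^+ - \phi_\delta^-)\, dm$ and the Birkhoff-average discrepancy $\phi_\delta^+ - \phi_\delta^-$ using a polynomial-in-$\delta$ bound on the Haar measure of such neighborhoods. This bound is where the geometry of the shortest-vector configuration defining $\Sigma$ enters, and it is exactly the \emph{regularity on average} condition flagged in the abstract. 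Finally, one optimizes $\delta$ against $T^{-1/2}\log^{3/2+\varepsilon}T$ to recover the stated rate, noting that all constants can be made uniform on a set of full Lebesgue measure via a Borel--Cantelli argument along a geometric sequence of times.
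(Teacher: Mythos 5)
Your overall reduction---Diophantine sums to Birkhoff averages of the diagonal flow on $\SL_{m+n}(\R)/\SL_{m+n}(\Z)$, plus an effective counting of best approximates to convert $T_N$ into $N$---matches the paper's strategy in outline, but the two steps you flag as the technical heart are handled very differently, and as written your version has a genuine gap. The paper deliberately does \emph{not} use a cross-section/suspension representation (it explicitly notes that the cross-section method of Shapira--Weiss and related works does not readily yield rates). Instead it works in discrete time: the observable $f(\Lambda)=\sum_{v\in S_\Lambda}(F\circ\phi)(v)$ sums over all distinguished primitive vectors $v$ of $\Lambda$ with $1\le\|\pi_2(v)\|<e$, so that $f(a_l u(\theta)\Gamma)$ collects exactly the best approximates with $e^l\le\|q\|<e^{l+1}$, and the Diophantine sum is literally $\sum_{l}f(a_l u(\theta)\Gamma)$ with no tube-thickening. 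Your continuous-time tube construction already has a problem before any smoothing: the return times to the cross-section are not bounded below (consecutive best approximates can have nearly equal $\|q\|$), so no fixed transverse width makes ``each crossing contributes exactly one summand'' literally true, and shrinking the width inflates the sup-norm of $\phi$.

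The more serious gap is the final optimization. If you sandwich the discontinuous observable between mollifications $\phi_\delta^{\pm}$ at scale $\delta$, their $C^k$ (or Sobolev) norms grow like $\delta^{-k}$, and any effective Birkhoff theorem driven by exponential mixing plus Gal--Koksma carries these norms into the error term. You then face $O(\delta)+O(\delta^{-k}\,N^{-1/2}\log^{3/2+\varepsilon}N)$, whose optimum in $\delta$ is a power saving far weaker than $N^{-1/2}$; the stated rate is unrecoverable by this route. The paper's way around this is the abstract theorem of \cite{AG25} (Theorem~\ref{thm: main abstract theorem}), which accepts a \emph{bounded} observable satisfying an average-regularity condition: $|f(g\Lambda)-f(\Lambda)|\le\tau_\varepsilon(\Lambda)$ for $g$ in an $\varepsilon$-ball, where $\tau_\varepsilon$ is unbounded but has $\int_\X\tau_\varepsilon\,d\mu_\X\ll\varepsilon$. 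Verifying this for $f$ (Lemma~\ref{lem: perturbed difference}, with the integral bound coming from Siegel's mean value theorem and Rogers' second-moment formula applied to the indicator functions $\varphi_{C\varepsilon}$ and $\Phi_{C\varepsilon}$ of thin shells) is what preserves the full $N^{-1/2}\log^{3/2+\varepsilon}N$ rate. To repair your argument you would need either this average-regularity input or some substitute that decouples the smoothing scale from the Sobolev-norm growth; without it the proposal proves a weaker error term than the theorem claims.
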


A central limit theorem in this higher-dimensional setting is also obtained; its statement appears in Section~\ref{sec: main result diophantine}.

\subsection{Strategy of Proof}

The key idea underlying our approach is to observe that the Doeblin-Lenstra law, modulo the L\'evy-Khintchine theorem, can be recast as the convergence of Birkhoff averages for an ergodic transformation—specifically, a diagonal flow-on the space of unimodular lattices $\SL_{m+n}(\mathbb{R})/\SL_{m+n}(\mathbb{Z})$. This perspective has a crucial advantage over more recent approaches that rely on the study of cross-sections in the space of lattices, as developed in~\cites{aggarwalghosh2024joint, AG24Levy, SW22, CC19}, which have been effective in analyzing qualitative properties of approximates but do not readily yield quantitative rates of convergence.

An effective version of the L\'evy-Khintchine theorem was established by the authors in~\cite{AG25}. Consequently, to derive quantitative forms of the Doeblin-Lenstra law, it suffices to establish effective convergence of Birkhoff averages. However, the observable function involved in our setting is not continuous, and its measurability is not immediate. To overcome this difficulty, we employ a criterion from~\cite{AG25}, which asserts that effective convergence of Birkhoff averages—and even a central limit theorem—holds for functions that are approximately invariant under small perturbations, in an average sense.

In the present work, we verify that this regularity condition applies to the function of interest. More precisely, we show that for a small perturbation $g$, the pointwise difference between $f(\cdot)$ and $f(g\cdot)$ is controlled by an unbounded function $\tau_g$ whose average over the space of unimodular lattices is small when $d(g,e)$ is small.

A further challenge lies in extending our results to the fractal setting. A crucial input for proving both effective Birkhoff convergence and the central limit theorem is a quantitative equidistribution property of the underlying measure. More specifically, the measure on $\Mat$ must satisfy what we refer to as \emph{Effective Multi-Equidistribution under the Identity} (Condition~(EMEI)) under the diagonal flow. While effective \emph{single}-equidistribution has received considerable attention recently due to its role in metric Diophantine approximation on fractals (see~\cites{benard24, KLW, KhalilLuethi, dattajana24}), our setting requires a significantly stronger property.

In this paper, we build on and extend the results of B\'enard-He-Zhang~\cite{benard24} by proving that fractal measures on the real line, generated by similarities with equal contraction ratios, satisfy Condition~(EMEI). Our proof crucially relies on the results of~\cite{benard24}.

\section{Main results}

\subsection{The Condition (EMEI) for fractal measures}
\label{sec: Effective Multi-equidistribution of fractal measure}

 Let $G= \SL_{m+n}(\R)$ and $\Gamma = \SL_{m+n}(\Z)$. Let $m_G$ equal Haar measure on $G$ so that the fundamental domain of the $\Gamma$ action on $G$ has measure equal to $1$. Define $\X= G/\Gamma$, then can be identified with the space of all unimodular lattices in $\R^{m+n}$ via the identification 
    $$
    A\Gamma \mapsto A\Z^{m+n}.
    $$
    Let $\mu_\X$ denote the unique $G$-invariant probability measure on $\X$. \\
    
    For $\theta \in \Mat$ and $t \in \R$, let us define $ u(\theta), a_t \in G$ as
    \begin{align}
        u(\theta) = \begin{pmatrix}
            I_m & \theta \\ & I_n
        \end{pmatrix}, \quad  a_t = \begin{pmatrix}
            e^{\frac{n}{m}t} I_m \\ & e^{-t} I_n
        \end{pmatrix}.
    \end{align}

We recall the property (EMEI) introduced in our paper \cite{AG25}.    
\begin{defn}
    A probability measure $\mu$ on $\Mat$ will be called to satisfy Condition (EMEI) (short for \emph{Effective Multi-equidistribution for Identity coset under diagonal flow $a_t$}) if it satisfies the following properties:
    \begin{itemize}
        \item $\mu$ is compactly supported,
        \item For all $F_0 \in C^{\infty}(\Mat )$, $F_1, \ldots, F_r \in C_c^{\infty}(\X)$ and $t_1, \ldots,t_r >0 $, we have 
    \begin{align}
         \int_{\Mat} F_0(\theta) \left( \prod_{i=1}^r F_i(a_{t_i} u(\theta) \Gamma)\right) \, d\mu(\theta) &=  \mu(F_0) \mu_\X( F_1) \cdots \mu_\X(F_r) \nonumber\\
         & \quad + {O}_{ r} \left(e^{-\delta D(t_1, \ldots, t_r)} \|F_0\|_{C^k}  \prod_{i=1}^r \|F_i\|_{C_k} \right),  \label{eq: mix hom identity}
    \end{align}
    where $D(t_1, \ldots, t_r) = \min\{t_i, |t_i - t_j|: 1\leq i ,j \leq r, i \neq j\}$.
    \end{itemize}
\end{defn}

\begin{rem}
\label{rem: class of measures satifying EMEI}
    As already noted in \cite{AG25}, it is known that if $\mu$ is equal to the restriction of the Lebesgue measure to $\M_{m \times n}([0,1])$, then $\mu$ satisfies the condition \textnormal{(EMEI)}, see \cite[Cor.~3.5]{KM3}. Now, we will prove that if $\mu$ is the Bernoulli measure in the limit set of the family of invertible affine maps with constant ratio and without fixed point in $\R$, then $\mu$ satisfies the condition (EMEI). In particular, the condition (EMEI) holds for the normalized restriction of the $\log 2/\log 3$-dimensional Hausdorff measure on the middle third Cantor set.
\end{rem}

Assume that $\{\kappa_e: e \in E\}$ is a family of invertible affine similarities on $\Mat$ of the form
\begin{align}
\label{eq: def kappa _ e}
    \kappa_e(\theta) := \rho( O_e \theta O_e' + w_e),
\end{align}
for some fixed $\rho \in (0,1)$, where $O_e$ (resp. $O_e'$) varies in a fixed compact subset $K_m$ (resp. $K_n$) of $\GL_m(\R)$ (resp. $\GL_n(\R)$). Assume that the set $E$ is compact and that  $\nu$ is a probability measure on $E$.

Let $\hat{\kappa}: E^\N \rightarrow \Mat $ denote the map 
\begin{align}
\label{eq: def hat kappa}
    \hat{\kappa}(\underline{e}) = \lim_{k \rightarrow \infty} \kappa_{e_1} \circ \cdots \circ \kappa_{e_k}(0),
\end{align}
for $\underline{e}=(e_1, e_2, \ldots) \in E^\N$. It is easy to see that the limit exists and, in fact, $\hat{\kappa}$ is continuous.  

Let $\mu$ denote the measure on $\Mat$ obtained by pushing forward  $\nu^{\otimes \N}$ under the map $\hat{\kappa}$.  \vspace{0.2in}

The first main theorem of this paper is the following.
\begin{thm}
\label{thm: Effective Multi-equidistribution of fractal measure}
    Fix a Bernoulli measure $\mu$ on $\Mat$ as above. Assume that there exists $\delta_\mu>0$ and $k \geq 1$ such that for all $F \in C_c^\infty(\X)$, and $x \in \X$, we have
\begin{align}
    \label{eq: equidistribution}
    \int_{\Mat} F(a_t u(\theta) x) \, d\mu(\theta) = \mu_\X(F) + O(\inj(x)^{-\beta} e^{- \delta_\mu t} \|F\|_{C^k}).
\end{align}
Then $\mu$ satisfies the condition (EMEI). In fact, the following stronger property holds. There exists a $\delta > 0$ such that for $x \in \X$, $F_0 \in C^{\infty}(\Mat )$, $F_1, \ldots, F_r \in C_c^{\infty}(\X)$ and $t_1, \ldots,t_r >0 $, we have 
    \begin{align}
    \label{eq: mix hom fractal}
         \int_{\Mat} F_0(\theta) \left( \prod_{i=1}^r F_i(a_{t_i} u(\theta) x)\right) \, d\mu(\theta) &= \mu(F_0) \mu_\X( F_1) \cdots \mu_\X(F_r) \nonumber\\
         & \quad + {O}_{x, r} \left(e^{-\delta D(t_1, \ldots, t_r)} \|F_0\|_{C^k}  \prod_{i=1}^r \|F_i\|_{C_k} \right),
    \end{align}
    where $D(t_1, \ldots, t_r) = \min\{t_i, |t_i - t_j|: 1\leq i ,j \leq r, i \neq j\}$.
\end{thm}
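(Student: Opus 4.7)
The plan is to proceed by induction on $r$, combining the self-similarity identity
\[
\mu = \int_{E^N} (\kappa_{\underline{e}})_* \mu \, d\nu^N(\underline{e}), \qquad \kappa_{\underline{e}} := \kappa_{e_1}\circ\cdots\circ\kappa_{e_N},
\]
with a commutation identity between $a_t$ and the contractions $\kappa_e$, so that \eqref{eq: equidistribution} can be applied to one factor at each stage. The base case $r=1$ (which carries an $F_0$ factor absent from \eqref{eq: equidistribution}) is obtained by the same mechanism, with the convention that the ``previous time'' $t_{r-1}$ equals $0$.

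The key commutation is as follows. Set $\tau := \tfrac{m}{m+n}\log(1/\rho)$, so $a_{\tau} u(\rho Y) a_{-\tau} = u(Y)$ for all $Y \in \Mat$. Writing $g_e := \diag(O_e, (O_e')^{-1})$ (normalized so $\det g_e = 1$), one has $g_e u(\theta) g_e^{-1} = u(O_e \theta O_e')$, and $g_e$ commutes with every $a_t$. A direct computation from $\kappa_e(\theta) = \rho(w_e + O_e \theta O_e')$ gives
\[
a_t u(\kappa_e(\theta)) x = g_e\, a_{t-\tau} u(\theta)\, x^\sharp_e, \qquad x^\sharp_e := u(O_e^{-1} w_e (O_e')^{-1})\, g_e^{-1} a_\tau x,
\]
and iterating yields $a_t u(\kappa_{\underline{e}}(\theta)) x = g_{\underline{e}}\, a_{t - N\tau} u(\theta)\, x^\sharp_{\underline{e}}$ with $g_{\underline{e}} := g_{e_1}\cdots g_{e_N}$ and an explicit point $x^\sharp_{\underline{e}} \in \X$.

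For the inductive step, reorder so that $0 < t_1 \le \cdots \le t_r$ and let $D := D(t_1,\dots,t_r)$. Choose $N$ so that $N\tau \in [t_{r-1}, t_r]$, balanced as $N\tau = \tfrac{(m+n)t_{r-1}/m + \delta_\mu t_r}{(m+n)/m + \delta_\mu}$ to match the two exponential errors below. By self-similarity, the left-hand side of \eqref{eq: mix hom fractal} equals
\[
\int_{E^N} \int F_0(\kappa_{\underline{e}}(\theta)) \prod_{i=1}^{r} F_i(a_{t_i} u(\kappa_{\underline{e}}(\theta)) x)\, d\mu(\theta)\, d\nu^N(\underline{e}).
\]
On each cylinder $C_{\underline{e}} := \kappa_{\underline{e}}(\supp \mu)$, of diameter $\lesssim \rho^N$, the map $\theta \mapsto F_i(a_{t_i} u(\theta) x)$ has Lipschitz constant $\lesssim e^{t_i(m+n)/m} \|F_i\|_{C^1}$; using this I replace $F_0(\kappa_{\underline{e}}(\theta))$ and each $F_i(a_{t_i} u(\cdot) x)$ with $i<r$ by their values at a cylinder center $y_{\underline{e}}$, with replacement error $O(e^{(t_{r-1} - N\tau)(m+n)/m})$ per cylinder (small since $N\tau \ge t_{r-1}$). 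For the surviving $F_r$ factor, the commutation rewrites $\int F_r(a_{t_r} u(\kappa_{\underline{e}}(\theta)) x)\, d\mu$ as $\int F_r(g_{\underline{e}} \cdot a_{t_r - N\tau} u(\theta) x^\sharp_{\underline{e}})\, d\mu$, and \eqref{eq: equidistribution} applied to $F_r(g_{\underline{e}} \cdot) \in C_c^\infty(\X)$ at base point $x^\sharp_{\underline{e}}$ and time $t_r - N\tau > 0$ (combined with $G$-invariance $\mu_\X(F_r(g_{\underline{e}} \cdot)) = \mu_\X(F_r)$) shows this equals $\mu_\X(F_r) + O(\inj(x^\sharp_{\underline{e}})^{-\beta} e^{-\delta_\mu(t_r - N\tau)} \|F_r\|_{C^k})$. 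Summing over $\underline{e}$, using the same constant-approximation in reverse to rewrite $\sum_{\underline{e}} \nu(\underline{e}) F_0(y_{\underline{e}}) \prod_{i<r} F_i(a_{t_i} u(y_{\underline{e}}) x)$ as $\int F_0 \prod_{i<r} F_i\, d\mu$ (modulo the same approximation error), and invoking the inductive hypothesis on this reduced integral (with error $O(e^{-\delta D(t_1,\dots,t_{r-1})}) \le O(e^{-\delta D})$) closes the induction.

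The main technical obstacle is controlling $\inj(x^\sharp_{\underline{e}})^{-\beta}$ on average over $\nu^N$: since $x^\sharp_{\underline{e}}$ involves $a_{N\tau} x$, it may drift into the cusp of $\X$. One handles this via a Margulis/Dani-type nondivergence estimate showing that $\inj(x^\sharp_{\underline{e}})$ stays sufficiently controlled for $\nu^N$-typical $\underline{e}$, with the resulting loss absorbed into a slightly smaller $\delta$. A secondary issue is ensuring that $g_{\underline{e}} = g_{e_1}\cdots g_{e_N}$ stays in a compact set as $N$ grows (so that $\|F_r(g_{\underline{e}} \cdot)\|_{C^k}$ remains uniformly bounded); this is automatic when $K_m, K_n$ lie in compact subgroups, as for genuine affine similarities (e.g., the middle-third Cantor measure).
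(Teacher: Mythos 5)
Your proposal follows essentially the same route as the paper: the same renormalization identity expressing $a_t u(\kappa_{\underline{e}}(\theta))x$ as a bounded element times $a_{t-N\tau}u(\theta)$ applied to a shifted base point, the same induction on $r$ that peels off the largest time $t_r$ with $N\tau$ placed between $t_{r-1}$ and $t_r$, and the same Lipschitz approximation of the remaining factors on level-$N$ cylinders (the paper simply takes $N\tau$ at the midpoint rather than your optimized weighted average). The one ingredient you leave vague --- controlling $\inj(x^\sharp_{\underline{e}})^{-\beta}$ on $\nu^{\otimes N}$-average --- is handled in the paper not by importing a Margulis--Dani nondivergence theorem (which is not off-the-shelf for these random-walk base points) but by applying the assumed equidistribution \eqref{eq: equidistribution} itself to a smoothed indicator of the thin part of $\X$, yielding $\nu_N\{\inj(e_N\cdots e_1 x)<\e\}\ll \e^{\gamma}+e^{-\delta_1 N}$ and keeping the argument self-contained.
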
 \vspace{0.2in}

The condition \eqref{eq: equidistribution} for the self-similar probability measures in the real line has been proved in \cite[Thm.~B]{benard24}.
\begin{thm}[{\cite[Thm.~B]{benard24}}]
\label{thm: benardhezhang24}
 Let $\mu$ be a non-atomic self-similar probability measure on $\R$. There exists a constant $c := c(\mu) > 0$ such that for all $t > 1, x \in X, f \in B^{\infty}_{\infty, 1}(\X)$ we have
 $$\int_{\R} f(a(t)u(s)x) d\mu(s) = \int_{\X}f d\mu_{\X} + O(inj(x)^{-1}S_{\infty, 1}(f)t^{-c}), $$
 where the implicit constant in $O(\cdot)$ only depends on $x$ and $\sigma$. 
\end{thm}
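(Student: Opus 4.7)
The plan is to combine iterated self-similarity of $\mu$ with effective equidistribution of long horocycles on $\SL_2(\R)/\SL_2(\Z)$. Since $\mu$ is supported on $\R$ we are in the case $m=n=1$, so $a_t = \diag(e^t,e^{-t})$, $u(s) = \left(\begin{smallmatrix}1&s\\0&1\end{smallmatrix}\right)$, and the crucial commutation is $a_t u(s)a_{-t} = u(e^{2t}s)$.

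\emph{Renormalization.} Write $\mu = \sum_{\mathbf{e}\in E^k} p_{\mathbf{e}}(\kappa_{\mathbf{e}})_*\mu$ at depth $k$, with each $\kappa_{\mathbf{e}}(s) = \rho_{\mathbf{e}} s + W_{\mathbf{e}}$. Substituting and using $a_t u(\kappa_{\mathbf{e}}(s')) = u(e^{2t}\rho_{\mathbf{e}}s')\,u(e^{2t}W_{\mathbf{e}})\,a_t$, and choosing $k = k(t)$ so that $c_{\mathbf{e}} := e^{2t}\rho_{\mathbf{e}}$ is of order $1$ for most $\mathbf{e}$, I obtain
\beqn
\int f(a_t u(s)x)\,d\mu(s) = \sum_{\mathbf{e}} p_{\mathbf{e}} \int f\bigl(u(c_{\mathbf{e}}s')\,y_{\mathbf{e}}\bigr)\,d\mu(s'),
\eeqn
where $y_{\mathbf{e}} := u(e^{2t}W_{\mathbf{e}})\,a_t x$ lies on the unstable horocycle through $a_t x$. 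The inner integral is a bounded-scale $u$-average of $f$ at the base point $y_{\mathbf{e}}$; one differentiation in $y$ shows it is a $C^1$ function of $y$ with Sobolev norm controlled by $S_{\infty,1}(f)$.

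\emph{Horocycle equidistribution at the base points.} Consider the empirical measure $\eta_t := \sum_{\mathbf{e}} p_{\mathbf{e}}\,\delta_{y_{\mathbf{e}}}$. As $\mathbf{e}$ ranges, the parameter $\alpha = e^{2t}W_{\mathbf{e}}$ samples the rescaled set $e^{2t}\,\supp(\mu)$, whose diameter grows exponentially in $t$. Combining effective equidistribution of long horocycles (from the spectral gap for $\SL_2(\R)/\SL_2(\Z)$, \emph{à la} Burger--Str\"ombergsson) with quantitative equidistribution modulo the horocycle period of $e^{2t}s$ for $s\sim\mu$ yields, for every $h\in C_c^\infty(\X)$,
\beqn
\int h\,d\eta_t = \int h\,d\mu_\X + O\!\left(\inj(x)^{-1}\,S_{\infty,1}(h)\,t^{-c}\right).
\eeqn
Applying this with $h(y) := \sum_{\mathbf{e}} p_{\mathbf{e}} \int f(u(c_{\mathbf{e}}s')y)\,d\mu(s')$ (after an additional averaging to decouple $h$ from $\mathbf{e}$) and using $u$-invariance of $\mu_\X$ yields the claimed main term $\int f\,d\mu_\X$ plus the stated error.

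\emph{Main obstacle.} The hardest ingredient is quantitative equidistribution modulo $1$ of $\{e^{2t}s : s \in \supp(\mu)\}$, a Fourier-analytic statement about the singular measure $\mu$. For non-atomic self-similar $\mu$ this can be approached either via a renewal-type induction that trades scale $t$ for scale $t - O(1)$ using self-similarity together with quantitative randomness of the translational parts $W_e$, or via polynomial Fourier decay $|\hat\mu(\xi)|\ll|\xi|^{-\delta}$ whose availability for non-atomic self-similar measures is itself a deep input (Varj\'u--Yu, Li--Sahlsten and related work). Cusp excursions of $a_t x$ are absorbed into the $\inj(x)^{-1}$ factor through Margulis-type height-function bounds, which quantify the negligible $p_{\mathbf{e}}$-weight of those $\mathbf{e}$ for which $y_{\mathbf{e}}$ lies deep in the cusp.
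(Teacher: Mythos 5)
First, a structural point: the paper does not prove this statement at all. It is quoted verbatim as Theorem~B of B\'enard--He--Zhang \cite{benard24} and used as a black-box input (it supplies hypothesis \eqref{eq: equidistribution} of Theorem~\ref{thm: Effective Multi-equidistribution of fractal measure}). So there is no in-paper proof to compare your attempt against; what you have written is an attempted reconstruction of an external result whose actual proof is a substantial paper in its own right.

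As a proof, your sketch has a genuine gap at exactly the point you flag as the ``main obstacle,'' and neither of the two routes you offer closes it. The renormalization step is fine ($a_t u(\rho s' + W) = u(e^{2t}\rho s')\,u(e^{2t}W)\,a_t$), but observe what it produces: the measure $\eta_t = \sum_{\mathbf{e}} p_{\mathbf{e}}\,\delta_{y_{\mathbf{e}}}$ is the pushforward of $\mu$, rescaled by $e^{2t}$ and discretized at unit scale, pushed along the horocycle through $a_t x$. Classical effective horocycle equidistribution (Burger, Str\"ombergsson) applies to the \emph{Lebesgue} average over a long horocycle arc; here you need equidistribution of the horocycle sampled along the fractal measure $e^{2t}_{*}\mu$, which is precisely the original problem shifted down by one scale. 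The renormalization therefore reformulates rather than reduces. Your first proposed fix, polynomial Fourier decay $|\hat\mu(\xi)| \ll |\xi|^{-\delta}$, provably fails for the flagship example: the middle-third Cantor measure satisfies $\hat\mu(3^k\pi) \not\to 0$ (Erd\H{o}s), and more generally self-similar measures with contraction ratios that are reciprocals of Pisot numbers have no Fourier decay. Your second proposed fix, a ``renewal-type induction using quantitative randomness of the translational parts,'' is a name for the missing argument, not an argument; the actual content in \cite{benard24} is an additive-combinatorial input (in the spirit of Bourgain's discretized sum-product/projection theorems and Hochman-type entropy increase) that quantifies how the self-similar structure forces the discretized measure $\eta_t$ to gain dimension transversally, and none of that machinery appears in your sketch. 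Two smaller issues: for a general self-similar measure the contraction ratios $\rho_e$ need not be equal, so no single depth $k(t)$ makes all $c_{\mathbf{e}}$ of order $1$ and you need a stopping-time decomposition of $E^{\N}$; and the cusp-excursion control is not automatic from ``Margulis-type height bounds'' applied to $a_t x$, since the points $y_{\mathbf{e}}$ are sampled by a singular measure for which the standard non-divergence estimates (Lebesgue, or Kleinbock--Lindenstrauss--Weiss for friendly measures) must themselves be invoked with care.
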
 
\begin{rem}
    For precise definitions of the function space $B^{\infty}_{\infty,1}(\X)$ and the Sobolev-type norms $S_{\infty,1}$, we refer the reader to~\cite{benard24}. For the purposes of this paper, it suffices to note that $C_c^\infty(\X) \subset B^{\infty}_{\infty,1}(\X)$ and that for all $f \in C_c^\infty(\X)$ and $k \geq 1$, we have
    \[
    S_{\infty,1}(f) \leq \|f\|_{C^k}.
    \]
\end{rem}

\vspace{0.2in}

Combining Theorem \ref{thm: Effective Multi-equidistribution of fractal measure} with {\cite[Thm.~B]{benard24}}, we get the following corollary.

\begin{cor}
\label{cor: Canter satisfies EMEI}
    Let $\mu$ be a non-atomic self-similar measure on $\mathbb{R}$ generated by similarities with a common contraction ratio. Then $\mu$ satisfies the condition (EMEI).
\end{cor}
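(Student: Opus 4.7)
The plan is direct: this corollary is obtained by feeding the Bénard--He--Zhang single-equidistribution theorem (Theorem~\ref{thm: benardhezhang24}) into the general multi-equidistribution upgrade provided by Theorem~\ref{thm: Effective Multi-equidistribution of fractal measure}. So the whole proof reduces to (a) realising $\mu$ as a Bernoulli measure of the form introduced in Section~2, and (b) verifying that hypothesis~\eqref{eq: equidistribution} is exactly what Theorem~\ref{thm: benardhezhang24} delivers.

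First, I would realise $\mu$ as a Bernoulli measure in the framework of \eqref{eq: def kappa _ e}--\eqref{eq: def hat kappa}. By hypothesis $\mu$ is generated by similarities on $\R$ with a common contraction ratio $\rho \in (0,1)$, so each generator has the form $\varphi_e(x) = \pm \rho x + w_e$ for some sign and some translation $w_e \in \R$. Setting $m = n = 1$, encoding the sign via $O_e \in \{\pm 1\} \subset \GL_1(\R)$, taking $O_e' = 1$, and keeping the translation part $w_e$, the defining family fits the form $\kappa_e(\theta) = \rho(O_e \theta O_e' + w_e)$. The set $E$ of generators is finite, hence compact; taking $\nu$ to be the probability vector recording the defining weights of the self-similar measure, the pushforward $\hat{\kappa}_* \nu^{\otimes \N}$ coincides with $\mu$, which is non-atomic by hypothesis.

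Second, I would check the effective single-equidistribution hypothesis~\eqref{eq: equidistribution}. Theorem~\ref{thm: benardhezhang24} provides exactly this statement modulo two routine bookkeeping points: (i) by the remark following Theorem~\ref{thm: benardhezhang24}, $C_c^\infty(\X) \subset B^\infty_{\infty,1}(\X)$ and $S_{\infty,1}(F) \leq \|F\|_{C^k}$ for any $k \geq 1$, so the Sobolev-type norm on the right of Theorem~\ref{thm: benardhezhang24} can be replaced by the $C^k$-norm needed in~\eqref{eq: equidistribution}; (ii) the time parameter used in~\cite{benard24} differs from $a_t$ by the reparametrisation $t \mapsto e^t$, so the polynomial rate $t^{-c}$ in Theorem~\ref{thm: benardhezhang24} corresponds to the exponential rate $e^{-\delta_\mu t}$ in~\eqref{eq: equidistribution}. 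With $\beta = 1$, this furnishes \eqref{eq: equidistribution} for $\mu$.

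With both ingredients in place, the corollary is immediate from Theorem~\ref{thm: Effective Multi-equidistribution of fractal measure}, which in fact delivers the stronger basepoint-dependent estimate~\eqref{eq: mix hom fractal}, and \emph{a fortiori} Condition~(EMEI) by restricting to $x = \Gamma$. The real substance is contained in the two inputs; the only place where one must be even mildly careful is the bookkeeping in step two, but this is routine rather than a genuine obstacle.
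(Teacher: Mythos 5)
Your proposal is correct and matches the paper's own derivation, which likewise obtains the corollary by feeding the Bénard--He--Zhang equidistribution theorem (as hypothesis~\eqref{eq: equidistribution}, after the same norm and time-reparametrisation bookkeeping) into Theorem~\ref{thm: Effective Multi-equidistribution of fractal measure}. The paper gives no further detail beyond this combination, so your more explicit verification of the two inputs is, if anything, slightly more careful than the source.
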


In particular, Corollary \ref{cor: Canter satisfies EMEI} implies that the normalised restriction of $\log 2/\log 3$-dimensional Hausdorff measure on middle third Cantor set satisfies condition (EMEI).

\vspace{0.2in}

\subsection{Diophantine approximation}
\label{sec: main result diophantine}

Fix norms on $\R^m$ and $\R^n$, and denote both of them by $\|.\|$. Fix $\theta \in \Mat$, and let $(p_k,q_k)_{k \in \Z}$ denote the sequence of best approximates of $\theta$. In this paper, we not only prove effective equidistribution of $\|q_k\|^n\|p_k+ \theta q_k\|^m$, but more generally study the effective joint equidistribution of 
$$
\left( \|p_k+\theta q_k\|^m \|q_k\|^n, \frac{p_k+\theta q_k}{\|p_k+\theta q_k\|} , \frac{q_k}{\|q_k\|}  \right),
$$
that is, not only the quality of approximation, but also the direction from which approximation appears. This, in particular,  implies the effective equidistribution of the quantity
$$
\|q_k\|^{n/m} (p_k+\theta q_k).
$$

Formally, we prove the following result.

\begin{thm}
    \label{thm:DLCEMEI}
    Fix $\mu$ on $\Mat$ satisfying condition (EMEI). Fix a function $F$ on $\R_{\geq 0} \times \Sphere^m \times \Sphere^n$ with bounded first derivative. Then there exists  $\beta > 0$ (depending only on $F$, $m,n$ and choice of norms, and independent of $\mu$) such that the following holds. For any $\varepsilon > 0$ and for $\mu$-almost every $\theta \in \Mat$, we have
    \begin{align}
        \label{eq: thm: main effective levy general}
       \frac{1}{N} \sum_{k=1}^N F\left( \|p_k+\theta q_k\|^m \|q_k\|^n, \frac{p_k+\theta q_k}{\|p_k+\theta q_k\|} , \frac{q_k}{\|q_k\|}  \right) = \beta  + O\left(N^{-1/2} \log^{\frac{3}{2}+\varepsilon} N\right),
    \end{align}
    where $(p_k,q_k)_k$ denote the sequence of best approximates of $\theta$, ordered according to increasing $\|q_k\|$.
\end{thm}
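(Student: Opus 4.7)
The plan is to recast the sum on the left-hand side as a Birkhoff integral of a suitable observable $\Phi$ on $\X$ along the $a_t$-orbit of $u(\theta)\Gamma$, and then to combine the effective Birkhoff theorem of \cite{AG25} (which requires (EMEI) together with an average regularity property of $\Phi$) with the effective L\'evy--Khintchine theorem of \cite{AG25} (which gives a quantitative bound on the elapsed time as a function of the best approximate index). Each best approximate $(p_k,q_k)$ of $\theta$ corresponds to an excursion time $\tau_k$ at which the lattice $a_{\tau_k} u(\theta)\Z^{m+n}$ contains a primitive vector with coordinates $(e^{n\tau_k/m}(p_k+\theta q_k),\,e^{-\tau_k}q_k)$ that is shortest in the partial order defining best approximates; this vector encodes both the scalar $\|p_k+\theta q_k\|^m\|q_k\|^n$ and the two unit-sphere directions. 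I would therefore define $\Phi : \X \to \R$, supported on the relevant cusp wedge, so that $\Phi(a_{\tau_k} u(\theta)\Gamma)$ equals the summand $F(\cdots)$, with $\sum_{k:\tau_k\le T} F(\cdots)$ comparable to $\int_0^T \Phi(a_s u(\theta)\Gamma)\,ds$ up to harmless boundary corrections.

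Granted the average regularity of $\Phi$, the effective Birkhoff theorem of \cite{AG25} yields, for $\mu$-a.e.\ $\theta$,
\[
\int_0^T \Phi(a_s u(\theta)\Gamma)\,ds \;=\; T\,\mu_\X(\Phi) \;+\; O\!\bigl(T^{1/2}\log^{3/2+\varepsilon} T\bigr),
\]
while the effective L\'evy--Khintchine theorem of \cite{AG25} supplies the effective count $\tau_N = c N + O(N^{1/2}\log^{3/2+\varepsilon}N)$ relating excursion time to the best approximate index. Substituting $T=\tau_N$ and dividing by $N$ recovers the claimed rate $O(N^{-1/2}\log^{3/2+\varepsilon}N)$, with $\beta = \mu_\X(\Phi)/c$.

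The main obstacle is therefore the average regularity check: for $g\in G$ with $d(g,e)\le\delta$, one must produce $\tau_g : \X\to\R_{\ge 0}$ dominating $|\Phi(x)-\Phi(gx)|$ with $\int\tau_g\,d\mu_\X = O(\delta^\alpha)$ for some $\alpha>0$. The obstruction is that $\Phi$ is genuinely discontinuous: an arbitrarily small perturbation of a lattice can swap which primitive vector is partial-order-shortest, or push a competitor vector in or out of the cusp wedge, so no pointwise Lipschitz bound is available. The plan is to tolerate the discontinuity but confine it to a thin set. One shows that $\Phi(x)\ne\Phi(gx)$ forces $x$ to lie $O(\delta)$-close to the locus of lattices admitting two vectors of comparable length, or admitting a vector on the boundary of the cusp wedge; a Siegel-integral estimate gives this locus $\mu_\X$-measure $O(\delta^\alpha)$. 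On the complement, the smoothness of $F$ and the Lipschitz dependence of the directional projections onto $\Sphere^m\times\Sphere^n$ deliver a uniform $O(\|F'\|_\infty\delta)$ oscillation bound. Taking $\tau_g$ to be a constant multiple of $\|F\|_\infty$ on the exceptional set plus $O(\delta)$ elsewhere yields the required dominating function, closing the argument.
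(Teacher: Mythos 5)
Your proposal follows essentially the same route as the paper: encode the summand as an observable on $\X$ whose Birkhoff sum along the $a_t$-orbit of $u(\theta)\Gamma$ reproduces $\sum_k F(\cdots)$, apply the effective Birkhoff theorem of \cite{AG25} under (EMEI) after verifying the average regularity condition (the discontinuity locus of the observable has $\mu_\X$-measure $O(\e)$ by Siegel--Rogers integral estimates, with a Lipschitz bound off it), and then invert the effective best-approximate count from \cite{AG25} to pass from the time parameter $T$ to the index $N$, giving $\beta$ as the ratio of the two asymptotic constants. The only differences are cosmetic: the paper works with a discrete-time sum of a windowed observable $f(\Lambda)=\sum_{v\in S_\Lambda}F\circ\phi(v)$ rather than a continuous-time integral, and uses an unbounded Siegel-transform-type dominating function $\tau_\e$ rather than a constant on an exceptional set.
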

\begin{rem}
    Theorems~\ref{thm: intro 1} and~\ref{thm: intro 3} follow from Theorem~\ref{thm:DLCEMEI} together with the discussion in Remark~\ref{rem: class of measures satifying EMEI}. Similarly, Theorem~\ref{thm: intro 2} follows from Theorem~\ref{thm:DLCEMEI} and Corollary~\ref{cor: Canter satisfies EMEI}.
\end{rem}

\vspace{0.2in}

To prove Theorem~\ref{thm:DLCEMEI}, we begin by defining, for any function $F$ on $\R_{\geq 0} \times \Sphere^m \times \Sphere^n$,
\begin{align}
    \label{eq: def cF}
    \cF_F(\theta,T) := \sum_{\substack{(p,q)\text{ is a best approximate} \\ \|q\| < e^T}} F\left( \|p + \theta q\|^m \|q\|^n,\ \frac{p + \theta q}{\|p + \theta q\|},\ \frac{q}{\|q\|} \right).
\end{align}
Theorem~\ref{thm:DLCEMEI} follows from the result stated below, together with effective equidistribution results from~\cite{AG25}; see Remark~\ref{rem: deriving DLCEMEI} for the derivation.

\begin{thm}
\label{thm: main theorem general}
Let $\mu$ be a measure on $\Mat$ satisfying condition~\textnormal{(EMEI)}, and let $F$ be a function on $\R_{\geq 0} \times \Sphere^m \times \Sphere^n$ with bounded first derivatives. Then there exist constants $\gamma, \sigma > 0$ (depending only on $F$, $m$, $n$, and the choice of norms, and independent of $\mu$) such that:
\begin{itemize}
    \item[(i)] For any $\varepsilon > 0$ and for $\mu$-almost every $\theta \in \Mat$,
    \begin{align}
        \label{eq: thm: main effective DL general}
        \cF_F(\theta, T) = \gamma T + O\left(T^{1/2} \log^{\frac{3}{2} + \varepsilon} T\right).
    \end{align}
    
    \item[(ii)] For every $\xi \in \mathbb{R}$,
    \[
    \mu\left( \left\{ \theta \in \Mat : \frac{\cF_F(\theta, T) - \gamma T}{T^{1/2}} < \xi \right\} \right) \longrightarrow \mathrm{Norm}_{\sigma}(\xi)
    \quad \text{as } T \to \infty,
    \]
    where
    \[
    \mathrm{Norm}_{\sigma}(\xi) := \frac{1}{\sigma \sqrt{2\pi}} \int_{-\infty}^\xi \exp\left(-\frac{1}{2} \left( \frac{x}{\sigma} \right)^2 \right) \, dx.
    \]
\end{itemize}
\end{thm}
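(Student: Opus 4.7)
The plan is to recast $\cF_F(\theta,T)$ as a Birkhoff-type integral along the $a_t$-orbit of $u(\theta)\Gamma$ in $\X$, and then invoke the effective Birkhoff theorem and central limit theorem of \cite{AG25}. Those results take as hypotheses \textnormal{(EMEI)}, which holds here by assumption, together with an averaged-regularity condition on the observable. Granted the reduction and this regularity, parts (i) and (ii) follow directly; the constant $\gamma$ will be identified as $\mu_\X(f)$, and $\sigma^2$ as the integrated autocovariance of $f$ under the $a_t$-flow.

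For the reduction I would use the standard dictionary between best approximates of $\theta$ and primitive vectors of the lattices $a_t u(\theta)\Z^{m+n}$: for a best approximate $(p,q)$, at time $t = \log\|q\|$ the image $a_t u(\theta)(p,q)^{\top} = (e^{nt/m}(p + \theta q),\, e^{-t} q)$ has unit lower-block norm and upper-block norm equal to $(\|q\|^n\|p+\theta q\|^m)^{1/m}$, while the full quality $\|y\|^m\|w\|^n$ and the two directional components are $a_t$-invariant. Fix a non-negative window $\psi$ on $\R$ supported on $[-1,0]$ with $\int \psi = 1$, and define
\[
f(\Lambda) = \sum_{v=(y,w) \in \Lambda_{\mathrm{prim}}} \psi(\log\|w\|)\, F\!\left(\|y\|^m\|w\|^n,\ y/\|y\|,\ w/\|w\|\right) \mathbf{1}\{v \text{ is a best vector of }\Lambda\},
\]
where a primitive vector $v=(y,w) \in \Lambda$ is a \emph{best vector} if no $(y',w') \in \Lambda_{\mathrm{prim}} \setminus \{\pm v\}$ satisfies $\|y'\|\le \|y\|$ and $\|w'\| \le \|w\|$. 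Since this partial order is preserved by $a_t$, the best-vector condition on $a_tu(\theta)\Z^{m+n}$ matches the best-approximate condition on $(p,q)$, and a Fubini calculation yields
\[
\cF_F(\theta, T) = \int_0^T f(a_t u(\theta)\Gamma)\, dt + O(1),
\]
the $O(1)$ error arising from best approximates whose $\log\|q\|$ lies within unit distance of $\{0,T\}$.

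The main obstacle is verifying the averaged regularity of $f$ demanded by \cite{AG25}: for each small $g \in G$ one needs a function $\tau_g : \X \to \R_{\ge 0}$ with $|f(gx)-f(x)| \le \tau_g(x)$ and $\mu_\X(\tau_g) \to 0$ as $g \to e$. The discontinuities of $f$ are located at lattices exhibiting a tie in the best-vector relation (distinct primitive vectors with comparable norms in both blocks); such lattices form a positive-codimension locus. I would take $\tau_g$ to be the Siegel transform of the indicator of a $d(g,e)$-thickening of the near-tie locus in $\R^m \times \R^n$, plus a smooth contribution controlled by $\|DF\|_\infty$. Siegel's mean value theorem then gives $\mu_\X(\tau_g) = O_F(d(g,e))$, as required. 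A subtlety appears in the cusp where lattices have many short vectors, but Siegel's identity is uniform enough to absorb this in the integrated bound.

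With the averaged regularity of $f$ established and \textnormal{(EMEI)} assumed for $\mu$, the effective Birkhoff theorem of \cite{AG25} delivers
\[
\frac{1}{T}\int_0^T f(a_t u(\theta)\Gamma)\, dt = \mu_\X(f) + O\!\left(T^{-1/2}\log^{3/2+\varepsilon} T\right)
\]
for $\mu$-almost every $\theta$, and the accompanying CLT yields a normal limit for the centred, $\sqrt{T}$-normalised deviation with variance $\sigma^2$ given by the time-integrated autocovariance of $f$ along the flow. Setting $\gamma := \mu_\X(f)$ and combining with the $O(1)$ boundary error from the reduction produces both (i) and (ii).
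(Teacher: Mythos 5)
Your overall strategy coincides with the paper's: encode the best approximates as the ``best vectors'' of the lattices $a_tu(\theta)\Z^{m+n}$ lying in a fixed window, package them into an observable $f$ on $\X$, check boundedness and averaged regularity of $f$, and then invoke the effective Birkhoff theorem and CLT of \cite{AG25} under (EMEI). Two points, one cosmetic and one substantive.

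The cosmetic point: the abstract input from \cite{AG25} is stated for the \emph{discrete} sums $\sum_{s=0}^{N-1} f(a_s u(\theta)\Gamma)$, so your continuous-time reduction $\cF_F(\theta,T)=\int_0^T f(a_tu(\theta)\Gamma)\,dt+O(1)$ with a smooth window $\psi$ does not plug directly into that theorem. The paper instead takes the window to be the sharp indicator $1\le\|\pi_2(v)\|<e$, which gives the exact identity $\cF_F(\theta,l,l+1)=f(a_lu(\theta)\Gamma)$ and hence a genuine discrete Birkhoff sum. Your version is repairable (discretize the integral, or prove a continuous-time analogue), but as written the appeal to \cite{AG25} does not match its hypotheses. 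You also do not verify that your $f$ is bounded, which is a hypothesis of the abstract theorem; this requires the uniform bound on the number of best vectors in a bounded window (the constant $M$ in the paper).

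The substantive gap is in the regularity verification. You propose $\tau_g$ as ``the Siegel transform of the indicator of a $d(g,e)$-thickening of the near-tie locus in $\R^m\times\R^n$.'' But the near-tie locus is not a subset of $\R^{m+n}$: whether $v$ is involved in a tie depends on the \emph{other} vectors of $\Lambda$, so the discontinuity set of $f$ is governed by \emph{pairs} $(v,w)$ of primitive vectors with $\bigl|\|\pi_1(v)\|-\|\pi_1(w)\|\bigr|$ or $\bigl|\|\pi_2(v)\|-\|\pi_2(w)\|\bigr|$ small. The correct majorant therefore contains a double sum $\sum_{v,w\in\Lambda_\prim,\,w\neq\pm v}\Phi_{C\e}(v,w)$ in addition to a single-vector boundary term, and its expectation over $\X$ cannot be computed by Siegel's mean value theorem alone: one needs Rogers' second moment formula to show $\int_\X\sum_{v,w}\Phi_{C\e}(v,w)\,d\mu_\X\ll\e$. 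This is precisely the content of the paper's Lemmas on the perturbed difference and the use of Rogers' theorem in the final step; omitting it leaves the key hypothesis $\mu_\X(\tau_g)=O(d(g,e))$ unproved. Your remark that ``Siegel's identity is uniform enough to absorb'' the cusp contribution is exactly the step that requires the second moment input.
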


\begin{rem}
\label{rem: deriving DLCEMEI}
To deduce Theorem~\ref{thm:DLCEMEI} from Theorem~\ref{thm: main theorem general}, define
\[
\cN(\theta, T) := \#\left\{(p,q) \in \Z^m \times (\Z^n \setminus \{0\}) : (p,q) \text{ is a best approximate and } \|q\| < e^T \right\}.
\]
Fix a measure $\mu$ on $\Mat$ satisfying condition~\textnormal{(EMEI)}, and fix $\varepsilon > 0$. Then by~\cite[Thm. 2.3]{AG25}, there exist constants $\gamma_0, \sigma_0 > 0$ (independent of $\mu$) such that for $\mu$-almost every $\theta$,
\begin{align}
    \label{eq:ag25 eq 1}
    \cN(\theta, T) = \gamma_0 T + O\left(T^{1/2} \log^{\frac{3}{2} + \varepsilon} T\right).
\end{align}

Now fix any $\theta \in \Mat$ satisfying both~\eqref{eq: thm: main effective DL general} and~\eqref{eq:ag25 eq 1}. Let $(p_k, q_k)_k$ denote the sequence of best approximates to $\theta$, ordered by increasing $\|q_k\|$. For each $N \in \mathbb{N}$, define $T_N := \log \|q_N\|$, where $(p_N, q_N)$ is the $N$-th best approximate. Then by construction,
\begin{align}
    \label{eq: ag25 a 1}
    \cN(\theta, T_N) &= N, \\
    \label{eq: ag25 a 2}
    \cF_F(\theta, T_N) &= \sum_{k=1}^N F\left( \|p_k + \theta q_k\|^m \|q_k\|^n,\ \frac{p_k + \theta q_k}{\|p_k + \theta q_k\|},\ \frac{q_k}{\|q_k\|} \right).
\end{align}
Equations~\eqref{eq:ag25 eq 1} and~\eqref{eq: ag25 a 1} yield the bounds
\begin{align}
    \label{eq:ag25 eq 4}
    \frac{1}{2} \gamma_0 T_N \leq N \leq 2 \gamma_0 T_N,
\end{align}
for all sufficiently large $N$.

Putting everything together, we obtain:
\begin{align*}
    &\frac{1}{N} \sum_{k=1}^N F\left( \|p_k + \theta q_k\|^m \|q_k\|^n,\ \frac{p_k + \theta q_k}{\|p_k + \theta q_k\|},\ \frac{q_k}{\|q_k\|} \right) \\
    &= \frac{\cF_F(\theta, T_N)}{\cN(\theta, T_N)} && \text{(by \eqref{eq: ag25 a 1}, \eqref{eq: ag25 a 2})} \\
    &= \frac{\gamma T_N + O\left(T_N^{1/2} \log^{\frac{3}{2} + \varepsilon} T_N\right)}{\gamma_0 T_N + O\left(T_N^{1/2} \log^{\frac{3}{2} + \varepsilon} T_N\right)} && \text{(by \eqref{eq: thm: main effective DL general}, \eqref{eq:ag25 eq 1})} \\
    &= \frac{\gamma}{\gamma_0} + O\left(T_N^{-1/2} \log^{\frac{3}{2} + \varepsilon} T_N\right) \\
    &= \beta + O\left(N^{-1/2} \log^{\frac{3}{2} + \varepsilon} N\right) && \text{(using \eqref{eq:ag25 eq 4}),}
\end{align*}
where $\beta := \gamma / \gamma_0$. This completes the derivation, showing how Theorem~\ref{thm: main theorem general} implies Theorem~\ref{thm:DLCEMEI}. Moreover, it also illustrates that Theorem~\ref{thm: main theorem general} can be viewed as a central limit theorem analogue for the Doeblin–Lenstra law.
\end{rem}


\section{Proof of Theorem \ref{thm: Effective Multi-equidistribution of fractal measure}}

This section aims to prove Theorem \ref{thm: Effective Multi-equidistribution of fractal measure}. We will assume the notation as in Section \ref{sec: Effective Multi-equidistribution of fractal measure}. That is, we fix a family $\{\kappa_e: e \in E\}$ of invertible affine similarities on $\Mat$, given by \eqref{eq: def kappa _ e}. We assume $E$ is a compact set with probability measure $\nu$, and $\mu$ equals the pushforward of $\nu^{\otimes \N}$ under $\hat{\kappa}$, defined as in \eqref{eq: def hat kappa}. We assume that $\mu$ satisfies the conditions of Theorem \ref{thm: Effective Multi-equidistribution of fractal measure}. 

We will denote by $H$ the group
$$
H:= \left\{ \begin{pmatrix}
    A & B \\ & C
\end{pmatrix} \in \SL_d(\R): A \in \GL_m (\R), C \in \GL_n(\R) \text{ and } B \in \Mat  \right\},
$$
and define
\begin{align}
    \label{eq: def hat a}
    \hat{a} =\begin{pmatrix}
    \rho^{\frac{-m}{m+n}} I_m   \\ &\rho^{\frac{n}{m+n}}  I_n
\end{pmatrix}.
\end{align}
We assume that $E \subset H$ by mapping each algebraic similarity $\theta \mapsto \kappa_e(\theta) := \rho( O_e \theta O_e' + w_e)$ to the following element of $G$,
$$
e:= \begin{pmatrix}
    O_e^{-1} \\ & O_e'
\end{pmatrix} \begin{pmatrix}
    I_m & w_e \\ & I_n
\end{pmatrix} \begin{pmatrix}
    \rho^{\frac{-m}{m+n}} I_m   \\ &\rho^{\frac{n}{m+n}}  I_n
\end{pmatrix}.
$$
Define the map $\Xi: H \rightarrow G$ as 
$$
 \Xi\begin{pmatrix}
    A & B \\ & C
\end{pmatrix} = \begin{pmatrix}
    \frac{1}{|\det(A)|^{1/m}} A \\ & \frac{1}{|\det(C)|^{1/n}} C
\end{pmatrix}^{-1}.
$$
Then the following identities hold for all $\underline{e}=(e_1, e_2, \ldots) \in E^\N$:
\begin{align}
  \label{eq: Xi imp}  \Xi(e_l \ldots e_1) &= \left(\begin{pmatrix}
        O_{e_l} \\ & O_{e_l}'
    \end{pmatrix} \cdots \begin{pmatrix}
        O_{e_1} \\ & O_{e_1}' 
    \end{pmatrix}  \right)^{-1}, \\
     \hat{a}^l u(\kappa_{e_1} \circ \cdots \kappa_{e_l}(\theta)) &= \Xi(e_l \ldots e_1) u(\theta ) e_l \cdots e_1, \label{eq: important conjugation for fractal 1}\\
     \hat{a}^l u(\hat{\kappa}({\underline{e}}) ) &= \Xi(e_l \ldots e_1) u(\hat{\kappa}(\tau^l({\underline{e}}))) e_l \cdots e_1, \label{eq: important conjugation for fractal 2}
\end{align}
where $\tau : E^\N \rightarrow E^\N$ denotes the shift map, and

For $l \geq 1$, define the $l$-fold convolution of $\nu$ on $H$ as
\begin{align}
    \nu_l := \nu * \cdots * \nu.
\end{align}
Define
\begin{align}
    \label{eq: def delta 1}
    \delta_1= \delta \cdot \log(\rho^{\frac{-m}{m+n}}).
\end{align}
Fix a constant $\alpha_0 > 1$ such that for all $A \in K_m$, $C \in K_n$, $B \in \hat{\kappa}(E^\N)$, and $x \in \X$, we have
\begin{align}
    \label{eq:abc 1}
    \alpha_0^{-1} \inj(x) \leq  \inj\left( 
    \begin{pmatrix}
        A & \\ & C
    \end{pmatrix}
    \begin{pmatrix}
        I_m & B \\ & I_n
    \end{pmatrix} x  
    \right) \leq \alpha_0 \inj(x).
\end{align}
Finally, let $\gamma > 0$ be such that for all $\epsilon > 0$,
\begin{align}
    \mu_\X\left(\left\{ y \in \X : \inj(y) < \epsilon \right\}\right) \ll e^{-\gamma}.
\end{align}

\subsection{Non-Escape of Mass}

The main aim of this subsection is to prove the following lemma, which implies that the random walk on $\X$ induced by the measure $\nu$ does not diverge on average. The following lemma will be used in proof of Lemma \ref{lem: induction step}.
\begin{lem}
    \label{lem: escape of mass}
    Fix $x \in \X$. Then for all $l \geq 1$ and $\e>0$, we have
    $$
   \nu_l \left\{g \in G: \inj(g.x)<\e \right\} \ll \e+ e^{-l},
    $$
    where implied constant depend only on $X$.
\end{lem}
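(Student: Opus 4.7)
The plan is to deduce the estimate from a single-step Margulis-type drift inequality for an appropriate height function, iterated $l$ times and combined with Markov's inequality. Concretely, define $\phi\colon \X \to [1, \infty)$ by $\phi(y) := \max(1, \inj(y)^{-1})$. The key technical step is to prove a one-step drift: there exist constants $c_0 \in (0, 1)$ and $C_0 > 0$ such that
\begin{equation*}
\int_E \phi(e \cdot y)\, d\nu(e) \leq c_0\, \phi(y) + C_0 \qquad \text{for all } y \in \X.
\end{equation*}

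Granted this drift, a Fubini iteration yields $\int_G \phi(g x)\, d\nu_l(g) \leq c_0^l\, \phi(x) + C_0/(1-c_0)$, and Markov's inequality then gives
\begin{equation*}
\nu_l\{g : \inj(g x) < \e\} = \nu_l\{g : \phi(g x) \geq \e^{-1}\} \leq \e\, c_0^l\, \phi(x) + \frac{C_0\, \e}{1-c_0} \ll_x \e + e^{-l},
\end{equation*}
where we absorb the rate constant $-\log c_0$ into the implied constant of $e^{-l}$ (or choose $c_0 \leq e^{-1}$ directly) and use that $\phi(x)$ is a finite constant depending only on the fixed point $x$. To establish the drift, use the factorization $e = k_e\, u(w_e)\, \hat{a}$ coming from \eqref{eq: def kappa _ e}: by \eqref{eq:abc 1} the bounded ``orthogonal-unipotent'' part $k_e u(w_e)$ changes $\inj$ by at most the factor $\alpha_0$, so the essential contraction comes from the hyperbolic diagonal $\hat{a}$, whose eigenvalues $\rho^{-m/(m+n)} > 1$ and $\rho^{n/(m+n)} < 1$ distort short vectors of $y$ in a manner amenable to an Eskin--Margulis computation via Minkowski's theorem on successive minima of the lattice $y$.

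The main obstacle is precisely the one-step drift inequality. Classical Eskin--Margulis arguments rely on averaging over subgroups with absolutely continuous density, whereas $\nu$ here may be a singular fractal measure, so that smoothing-based averaging is unavailable. The resolution is that the contraction need not be produced by $\nu$ itself: each $e \in E$ already contains the \emph{deterministic} hyperbolic factor $\hat{a}$, and the $\nu$-integration only has to preserve the uniform bounded-perturbation estimate \eqref{eq:abc 1}, which it does by the construction of $E$. Thus the drift reduces to a deterministic estimate on cusp excursions of $\X$ under $\hat{a}$, which is standard. An alternative approach would use the equidistribution hypothesis \eqref{eq: equidistribution} directly together with the conjugation identity \eqref{eq: important conjugation for fractal 1} (applied at $\theta = 0$) to convert the $\nu_l$-average into a $\mu$-average and then estimate a smoothed indicator of $\{\inj < \e\}$; however, this incurs Sobolev losses of order $\e^{-k}$ that make the bookkeeping heavier, while the Margulis route avoids them.
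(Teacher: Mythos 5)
Your main route breaks down at the one-step drift inequality, and the reduction you offer for it is false. A single deterministic hyperbolic element $\hat{a}$ does \emph{not} satisfy $\phi(\hat{a}y)\leq c_0\phi(y)+C_0$ for $\phi=\max(1,\inj^{-1})$: if it did, $\phi(\hat{a}^l y)$ would stay bounded along every orbit, whereas e.g.\ $\hat{a}^l\Z^{m+n}$ contains the vector $\rho^{ln/(m+n)}e_{m+1}\to 0$, so $\inj(\hat{a}^l\Z^{m+n})\to 0$. The whole point of Eskin--Margulis/Benoist--Quint contraction is that the averaging is essential -- a short vector lying in the contracted eigenspace of $\hat{a}$ must be moved out of it by the random part of the step -- and here the random part cannot do that: every $e\in E$ is block upper-triangular, so the support of $\nu$ generates a subgroup of the parabolic $H$, which is not Zariski dense in $G$, and neither the Eskin--Margulis nor the Benoist--Quint recurrence machinery applies. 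Non-divergence of these parabolic ``fractal walks'' is genuinely hard and is exactly what the hypothesis \eqref{eq: equidistribution} (i.e.\ B\'enard--He--Zhang) encodes; the lemma is not provable by a soft drift argument that ignores that hypothesis.

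The route you dismiss as an ``alternative'' is in fact the paper's proof, and your reason for rejecting it is also mistaken. One rewrites $\nu_l\{\inj(g\cdot x)<\e\}$ via the conjugation identity \eqref{eq: important conjugation for fractal 2} as $\mu\{\theta:\inj(\hat{a}^l u(\theta)x)<\alpha_0\e\}$, and then smooths the indicator of the cusp neighbourhood by convolving with a \emph{fixed} bump function $\eta_0$ on $G$ (fixed once and for all, not at scale $\e$). Since $\|\eta_0*\ind_{\{\inj>\delta\}}\|_{C^k}\ll\|\eta_0\|_{C^k}\|\ind\|_{C^0}$, there is no $\e^{-k}$ Sobolev loss; applying \eqref{eq: equidistribution} to this smoothed function and then the cusp-volume bound $\mu_\X(\{\inj<\delta\})\ll\delta^\gamma$ gives the stated estimate directly. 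If you want to salvage your write-up, you should replace the drift argument by this conjugation-plus-equidistribution argument.
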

\begin{proof}
   Fix a non-negative smooth function $\eta_0$ on $G$, supported on a small ball around the identity in $G$, such that
   \begin{align*}
    \int_G \eta_0(g)\, dm_G(g) &= 1, \\
    \|\eta_0\|_{C^j} &< \infty.
\end{align*}
   
   Also fix $\alpha_1>0$ such that for all $x \in \X$ and $g \in \supp(\eta_{{0}})$, we have
   \begin{align}
        \label{eq:abc 2}
        \alpha_1^{-1} \inj(x) \leq  \inj\left(g x  \right) \leq \alpha_1 \inj(x).
    \end{align}
    Then by definition of $\nu_l$, we have
    \begin{align}
    \label{eq: eq 1}
        \nu_l \left\{g \in G: \inj(g.x)<\e \right\} = \nu^{\otimes l} \left\{ (e_1, \ldots, e_l):  \inj(e_l \cdots e_1.x)<\e \right\} .
    \end{align}
    Now by definition of $\alpha_0$, we have $\inj(e_l \cdots e_1.x)<\e$ implies that $$\inj(\Xi(e_l \ldots e_1) u(\hat{\kappa}(e_{l+1}, e_{l+2}, \ldots)) e_l \cdots e_1.x) < \alpha_0 \e,$$ for all $ (e_{l+1}, e_{l+2}, \ldots) \in E^\N $. Therefore \eqref{eq: eq 1} implies that
    \begin{align}
    \nonumber
        &\nu_l \left\{g \in G: \inj(g.x)<\e \right\} \\
        &\leq  \nu^{\otimes l} \otimes \nu^{\otimes \N} \left\{ \underline{e}=( e_1, e_2, \ldots, ) \in  E^\N: \inj(\Xi(e_l \ldots e_1) u(\hat{\kappa}(e_{l+1}, e_{l+2}, \ldots)) e_l \cdots e_1.x) < \alpha_0 \e \right\} \nonumber\\
        &= \nu^{\otimes \N} \left\{ \underline{e}=( e_1, \ldots, e_l, e_{l+1}, \ldots) \in  E^\N: \inj(\Xi(e_l \ldots e_1) u(\hat{\kappa}(\tau^l(\underline{e}))) e_l \cdots e_1 \cdot x) < \alpha_0 \e \right\} \nonumber\\
        &= \nu^{\otimes \N} \left\{ \underline{e} \in  E^\N: \inj(\hat{a}^l u(\hat{\kappa}(\underline{e}))x ) < \alpha_0 \e \right\},  \quad \text{ using  equation \eqref{eq: important conjugation for fractal 2}}  \nonumber \\
        &= \mu \left\{ \theta \in \Mat: \inj(\hat{a}^l u(\theta)x) < \alpha_0 \e  \right\}, \label{eq: eq 2}
    \end{align}
    where in last equality we used the fact that $\mu $ equals the pushforward of $\nu^{\otimes \N}$ under $ \hat{\kappa}$.

    Now using definition of $\alpha_1$, the condition $\inj(\hat{a}^l u(\theta) x) < \alpha_0 \e$ implies that 
    $$
        \inj(g^{-1} \cdot \hat{a}^l u(\theta)x) < \alpha_0 \alpha_1 \e,
    $$
    for all $g \in \supp(\eta_{0})$. Therefore, equation \eqref{eq: eq 2} implies that
    \begin{align}
        &\nu_l \left\{g \in G: \inj(g.x)<\e \right\} \nonumber\\
        &\leq \int_G \eta_{0}(g)  \mu \left\{ \theta \in \Mat: \inj(g^{-1} \hat{a}^l u(\theta)x) < \alpha_0 \alpha_1 \e  \right\} \, dm_G(g)\nonumber \\
        &= \int_G \int_{\Mat} \eta_{0}(g) \ind_{\{y: \inj(y)< \alpha_0 \alpha_1 \e\}}(g^{-1} \hat{a}^l u(\theta) x) \, dm_G(g) d\mu(\theta) \nonumber\\
        &= \int_{\Mat} (\eta_{0} * \ind_{\{y: \inj(y)< \alpha_0 \alpha_1 \e\}})(\hat{a}^l u(\theta) x) \nonumber \\
        &= 1- \int_{\Mat} (\eta_{0} * \ind_{\{y: \inj(y)> \alpha_0 \alpha_1 \e\}})(\hat{a}^l u(\theta) x) . \label{eq: eq 3}
    \end{align}
    Now note that the function $$ \eta_{0} * \ind_{\{y: \inj(y)> \alpha_0 \alpha_1 \e\}}$$ is a  compactly supported continuous function on $\X$. Therefore \eqref{eq: equidistribution} and \eqref{eq: eq 3} gives that
    \begin{align*}
         &\nu_l \left\{g \in G: \inj(g.x)<\e \right\} \nonumber\\
        &= 1 - \mu_\X(\eta_{0} * \ind_{\{y: \inj(y)> \alpha_1 \alpha_0 \e\}}) + O( \inj(x)^{-\beta} e^{-\delta_1 l} \|\eta_{0} * \ind_{\{y: \inj(y)> \alpha_1 \alpha_0 \e\}}\|_{C^k}) \\
        &= \mu_{\X} (\{y: \inj(y)< \alpha_1 \alpha_0 \e\}) + + O( \inj(x)^{-\beta} e^{-\delta_1 l} \|\eta_{0}\|_{C^l} \| \ind_{\{y: \inj(y)> \alpha_1 \alpha_0 \e\}}\|_{C^0}) \\
        &\ll \e^\gamma +  \inj(x)^{-\beta} e^{-\delta_1 l},
    \end{align*}
  where implied constant depend only on choice of $\eta_0$, and is independent of $\e$. Hence proved.
\end{proof}

\subsection{Induction Step}
We will prove Theorem \ref{thm: Effective Multi-equidistribution of fractal measure} by using induction on $r$. The following lemma will be crucial for the inductive step.
\begin{lem}
\label{lem: induction step}
    There exists a $0<\delta_0< -\log \rho$ such that following holds for all $x \in \X$, $F_1 \in C^\infty(\Mat)$, $F_2 \in C_c^\infty(\X)$ and natural numbers $l<j$, we have
    \begin{align*}
         \int_{\Mat} F_1(\theta) F_2(\hat{a}^j u(\theta) x) \, d\nu(\theta) &= \mu(F_1) \mu_{\X}(F_2) + O_x( e^{-\delta_0 (j-l) } \|F_1\|_{C^0} \|F_2\|_{C^k}  ) \\
         &\quad + O_x(  e^{-\delta_0 l} \|F_1\|_{C^0} \|F_2\|_{C^0} + \rho^l  \|F_1\|_{\Lip} \|F_2\|_{C^0}).
    \end{align*}
\end{lem}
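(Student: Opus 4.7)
\medskip

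The plan is to condition on the first $l$ random-walk steps $(e_1,\ldots,e_l)$, move them outside the remaining flow $\hat{a}^{j-l}$, and then invoke the single-equidistribution hypothesis \eqref{eq: equidistribution} for the residual $(j-l)$-step integral. For $\theta=\hat{\kappa}(\underline{e})$, the identity \eqref{eq: important conjugation for fractal 2}, combined with the fact that the block-scalar matrix $\hat{a}^{j-l}$ commutes with the block-diagonal matrix $\Xi_l := \Xi(e_l\cdots e_1)$, gives
\[
\hat{a}^j u(\theta)\,x \;=\; \Xi_l\, \hat{a}^{j-l} u\!\bigl(\hat{\kappa}(\tau^l\underline{e})\bigr)\, y_l, \qquad y_l := e_l\cdots e_1 \cdot x.
\]

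The next step is a Lipschitz approximation: replace $F_1(\hat{\kappa}(\underline{e}))$ by $F_1(\theta_l(\underline{e}))$, where $\theta_l(\underline{e}):=\kappa_{e_1}\circ\cdots\circ\kappa_{e_l}(0)$. Since the cylinder $\kappa_{e_1}\circ\cdots\circ\kappa_{e_l}(\hat{\kappa}(E^\N))$ has diameter $\ll \rho^l$, this swap costs $O(\rho^l\|F_1\|_\Lip\|F_2\|_{C^0})$, accounting precisely for the $\rho^l\|F_1\|_\Lip$ error term in the lemma. After the substitution, the integrand depends separately on $(e_1,\ldots,e_l)$ and on $\tau^l\underline{e}$, so Fubini together with $\mu = \hat\kappa_*\nu^{\otimes\N}$ rewrites the integral as
\[
\int_{E^l} F_1(\theta_l)\,d\nu^{\otimes l}(e_1,\ldots,e_l) \cdot \int_{\Mat} F_2\!\bigl(\Xi_l\, \hat{a}^{j-l} u(\theta')\, y_l\bigr)\,d\mu(\theta').
\]
The first factor equals $\mu(F_1)+O(\rho^l\|F_1\|_\Lip)$ by the same Lipschitz estimate. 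For the inner integral, set $\tilde F_2(\cdot):=F_2(\Xi_l\,\cdot)$ and apply \eqref{eq: equidistribution} at the basepoint $y_l$: the $G$-invariance of $\mu_\X$ makes the main term $\mu_\X(\tilde F_2)=\mu_\X(F_2)$, and the error is $O(\inj(y_l)^{-\beta}e^{-\delta_\mu(j-l)}\|F_2\|_{C^k})$, since the $O_e,O'_e$ lie in fixed compact sets, keeping $\Xi_l$ bounded uniformly in $l$ and hence $\|\tilde F_2\|_{C^k}\ll\|F_2\|_{C^k}$.

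The chief obstacle is that $\inj(y_l)^{-\beta}$ is not $\nu_l$-integrable a priori; this is precisely where Lemma~\ref{lem: escape of mass} is essential. Split the outer integration over $E^l$ into the good set $G_l(\e_0):=\{\inj(y_l)\geq\e_0\}$ and its complement. On $G_l(\e_0)$ the equidistribution error contributes at most $O(\|F_1\|_{C^0}\|F_2\|_{C^k}\,\e_0^{-\beta}e^{-\delta_\mu(j-l)})$; on its complement, Lemma~\ref{lem: escape of mass} bounds the $\nu_l$-measure by $O_x(\e_0+e^{-l})$, contributing $O_x(\|F_1\|_{C^0}\|F_2\|_{C^0}(\e_0+e^{-l}))$. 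Choosing $\e_0 := e^{-\delta_\mu(j-l)/(\beta+1)}$ balances $\e_0^{-\beta}e^{-\delta_\mu(j-l)}$ against $\e_0$, yielding a joint exponential saving of $e^{-\delta_\mu(j-l)/(\beta+1)}$. Setting $\delta_0 := \min\{\delta_\mu/(\beta+1),\,1\}$ and, if necessary, shrinking $\delta_0$ so that $\delta_0<-\log\rho$, assembles the three contributions (from the $\rho^l$ Lipschitz term, the equidistribution error on the good set, and the measure of the bad set) into exactly the bound claimed in the lemma.
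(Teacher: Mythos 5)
Your argument is correct and follows essentially the same route as the paper's proof: the conjugation identity to move the first $l$ steps past $\hat a^{\,j-l}$, the Lipschitz replacement of $F_1$ by its value at the cylinder's base point (costing $\rho^l\|F_1\|_{\Lip}$), the good/bad splitting in $\inj(e_l\cdots e_1 x)$ controlled by Lemma~\ref{lem: escape of mass}, the equidistribution hypothesis \eqref{eq: equidistribution} applied at the moved base point, and the optimization of the threshold $\e_0$. The only blemish is the displayed factorization into a product of two integrals --- the second factor still depends on $(e_1,\ldots,e_l)$ through $\Xi_l$ and $y_l$, so it should be written as an iterated integral --- but your subsequent splitting of ``the outer integration over $E^l$'' shows you do treat it correctly.
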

\begin{proof}
    Using fact that $\mu $ equals the pushforward of $\nu^{\otimes \N}$ under $ \hat{\kappa}$, we have
    \begin{align}
       &\int_{\Mat} F_1(\theta) F_2(\hat{a}^j u(\theta) x) \, d\nu(\theta) \nonumber  \\
        &= \int_{E^l} \int_{\Mat} F_1(\kappa_{e_1} \circ \cdots \circ \kappa_{e_l}(\theta)) F_2(\hat{a}^{j} u(\kappa_{e_1} \circ \cdots \circ \kappa_{e_l}(\theta)) x ) \, d\mu(\theta) d\nu^{l}(e_1, \ldots, e_l). \label{eq: 1 eq}
    \end{align}
    Now using equations \eqref{eq: important conjugation for fractal 1} and \eqref{eq: 1 eq}, along with fact that image of $\Xi$ and $\hat{a}$ commutes, we get that
    \begin{align}
        & \int_{\Mat} F_1(\theta) F_2(\hat{a}^j u(\theta) x) \, d\nu(\theta) \nonumber \\
    &= \int_{E^l} \int_{\Mat} F_1(\kappa_{e_1} \circ \cdots \circ \kappa_{e_l}(\theta)) F_2 \circ \Xi(e_l \cdots e_1) (\hat{a}^{j-l} \   u(\theta) \  e_l \cdots e_1 \  x ) \, d\mu(\theta) d\nu^{l}(e_1, \ldots, e_l) \nonumber \\
    &= I_1(\e) + I_2(\e), \label{eq: 2 eq}
    \end{align}
    where $I_1(\e)$ is integral of the function
    $$
    (e_1, \ldots, e_l) \mapsto \int_{\Mat}  F_1( \kappa_{e_1} \circ \ldots \kappa_{e_l}(\theta)) F_2 \circ \Xi(e_l \cdots e_1)   (\hat{a}^{j-l} u(\theta) e_l \cdots e_1 x ) \, d\mu(\theta) 
    $$ 
    over the set $J_1(\e)$, consisting of all $(e_1, \ldots, e_l)$ such that $\inj(e_l \cdots e_1 x)>\e$ and $I_2(\e)$ denotes the integral over the complement $J_2(\e) = \Mat \setminus J_1(\e)$.
    
    We first estimate $I_2(\e)$ by using Lemma \ref{lem: escape of mass}, to get
    \begin{align}
        \label{eq: abcd 1}
        |I_2(\e)| \ll (\e^{\gamma} + e^{-\delta_1 l} \inj(x)^{\beta}) \|F_1\|_{C^0} \|F_2\|_{C^0}. 
    \end{align}

    To estimate $I_1(\e)$, first of all note that for all $(e_1, \ldots, e_j ) \in E^j$ we have 
    $$
    F_1( \kappa_{e_1} \circ \ldots \kappa_{e_l}(\theta)) = F_1(\kappa_{e_1} \circ \ldots \kappa_{e_l}(0)) + O(\|F_1\|_{\Lip} \rho^l ),
    $$
    where implied constant depend only on $\supp(\mu)$. Therefore we have
    \begin{align}
        &\int_{\Mat}  F_1( \kappa_{e_1} \circ \ldots \kappa_{e_l}(\theta)) F_2 \circ \Xi(e_l \cdots e_1)   (\hat{a}^{j-l} u(\theta) e_l \cdots e_1 x ) \, d\mu(\theta) \nonumber \\
        &=  F_1(\kappa_{e_1} \circ \ldots \kappa_{e_l}(0)) \cdot \int_{\Mat} F_2 \circ \Xi(e_l \cdots e_1)   (\hat{a}^{j-l} u(\theta) e_l \cdots e_1 x ) \, d\mu(\theta) + O(\rho^l \|F_1\|_{\Lip} \|F_2\|_{C^0}). \label{eq: 3 eq}
    \end{align}
    Now assume that $\inj(e_l \cdots e_1 x)>\e$, then using equations \eqref{eq: equidistribution} and \eqref{eq: 3 eq}, we have 
    \begin{align}
       &\int_{\Mat}  F_1( \kappa_{e_1} \circ \ldots \kappa_{e_l}(\theta)) F_2 \circ \Xi(e_l \cdots e_1)   (\hat{a}^{j-l} u(\theta) e_l \cdots e_1 x ) \, d\mu(\theta) \nonumber \\
       &= F_1(\kappa_{e_1} \circ \ldots \kappa_{e_l}(0)) \cdot \mu_\X (F_2)   + O(e^{-\delta_1 (j-l)} \|F_2\|_{C^k} \|F_1\|_{C^0} \e^{-\beta}  + \rho^l  \|F_1\|_{\Lip} \|F_2\|_{C^0} ) \nonumber \\
       &= \int_{\Mat} F_1(\kappa_{e_1} \circ \ldots \kappa_{e_l}(\theta)) \, d\mu(\theta)  \cdot \mu_\X (F_2) + O(e^{-\delta_1 (j-l)} \|F_2\|_{C^k} \|F_1\|_{C^0} \e^{-\beta}   + \rho^l  \|F_1\|_{\Lip} \|F_2\|_{C^0} ) \label{eq: 4 eq}
    \end{align}
    Equation \eqref{eq: 4 eq} implies that
    \begin{align}
        I_1(\e) &= \mu_\X (F_2) \cdot \int_{J_1(\e)} \int_{\Mat }F_1(\kappa_{e_1} \circ \ldots \kappa_{e_l}(\theta)) \, d\mu(\theta) \nonumber \\
        & \quad +  O(e^{-\delta_1 (j-l)} \|F_2\|_{C^k} \|F_1\|_{C^0} \e^{-\beta} + \rho^l \|F_1\|_{\Lip} \|F_2\|_{C^0}).  \label{eq: 5 eq}
    \end{align}
    Now by using Lemma \ref{lem: escape of mass}, we have $\mu(\Mat \setminus J_1(\e)) \ll \e^{\gamma} + e^{-\delta_1 l} \inj(x)^{\beta}$. Therefore \eqref{eq: 5 eq} implies that
    \begin{align}
        I_1(\e )&= \mu_\X (F_2) \cdot \int_{E^l} \int_{\Mat }F_1(\kappa_{e_1} \circ \ldots \kappa_{e_l}(\theta)) \, d\mu(\theta) d\nu^l(e_1, \ldots, e_l) \nonumber \\
        &\quad + O( (\e^{\gamma} + e^{-\delta_1 l} \inj(x)^{\beta}) \|F_1\|_{C^0} \|F_2\|_{C^0}) \nonumber \\
        & \quad +  O(e^{-\delta_1 (j-l)} \|F_2\|_{C^k} \|F_1\|_{C^0} \e^{-\beta} + \rho^l \|F_1\|_{\Lip} \|F_2\|_{C^0} ).  \label{eq: 6 eq}
    \end{align}
    Now using fact that 
    $$
    \int_{E^l} \int_{\Mat }F_1(\kappa_{e_1} \circ \ldots \kappa_{e_l}(\theta)) \, d\mu(\theta) = \int_{\Mat }F_1(\theta) \, d\mu(\theta),
    $$
    the equations \eqref{eq: 2 eq}, \eqref{eq: abcd 1} and \eqref{eq: 6 eq} implies that
    \begin{align}
        \int_{\Mat} F_1(\theta) F_2(\hat{a}^j u(\theta) x) \, d\nu(\theta)  &= \mu(F_1) \mu_{\X}(F_2) + O( (\e^{\gamma} + e^{-\delta_1 l} \inj(x)^{\beta}) \|F_1\|_{C^0} \|F_2\|_{C^0}) \nonumber \\
        & + O(e^{-\delta_1 (j-l)} \|F_2\|_{C^k} \|F_1\|_{C^0} \e^{-\beta} + \rho^l \|F_1\|_{\Lip} \|F_2\|_{C^0} ). \label{eq: 7 eq}
    \end{align}
     Putting $\e= e^{-\frac{\delta_1(j-l)}{\beta+ \gamma} }$, and $\delta_0= \min\{ \frac{\gamma \delta_1}{\beta+ \gamma} , \delta_{\mu} \}$ in \eqref{eq: 7 eq}, the lemma follows.
\end{proof}

To apply Lemma \ref{lem: induction step} in the inductive step, we will need to treat functions on $\X$ as functions on $\Mat$. The following lemma bounds the Lipschitz norm of such a function.

\begin{lem}
    \label{lem: C^k norm of restricted function}
   For each $ r\geq 1$, there exists a constant $c_r>0$ such that the following holds. Suppose $F_0 \in C^\infty(\Mat)$ and $F_1, \ldots, F_r \in C_c^\infty(\X)$. Suppose $l_1, \ldots, l_r \in \N$ and $x \in \X$. Then the Lipschitz norm of the function 
   \begin{align}
       \label{eq: aaa 1}
   \theta \mapsto F_0(\theta) F_1(\hat{a}^{l_1} u(\theta)x) \cdots F_r(\hat{a}^{l_r} u(\theta)x),
   \end{align}
  is less than $c_r \rho^{\max\{l_i\}} \|F_0\|_{C^1} \cdots \|F_r\|_{C^1}$.
\end{lem}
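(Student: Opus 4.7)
The plan is a direct computation using the product rule and the chain rule, the only nontrivial ingredient being the rate at which the map $\theta \mapsto \hat{a}^{l_i}u(\theta)x$ stretches under the diagonal flow.

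By the product rule applied to the $(r{+}1)$-fold product in \eqref{eq: aaa 1}, the Lipschitz norm on $\Mat$ is bounded, up to constants, by
\[
\|F_0\|_{C^1}\prod_{i=1}^r \|F_i\|_{C^0} \;+\; \sum_{i=1}^r \|F_0\|_{C^0}\Big(\prod_{j\neq i}\|F_j\|_{C^0}\Big)\,\bigl\|F_i\circ(\hat{a}^{l_i}u(\cdot)x)\bigr\|_{\Lip},
\]
so it remains to bound each $\bigl\|F_i\circ(\hat{a}^{l_i}u(\cdot)x)\bigr\|_{\Lip}$. The essential observation for doing so is the commutation identity
\[
\hat{a}^{l}u(\theta)\hat{a}^{-l}=u(\rho^{-l}\theta),
\]
which follows immediately from the block forms of $\hat a$ and $u(\theta)$: the upper-right entries are multiplied by $\rho^{-lm/(m+n)}$ on the left and $\rho^{-ln/(m+n)}$ on the right, producing $\rho^{-l}\theta$. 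Hence $\hat{a}^{l_i}u(\theta)x=u(\rho^{-l_i}\theta)\hat{a}^{l_i}x$, and the $\theta$-variation is controlled entirely by the factor $\rho^{-l_i}$ multiplying $\theta$ inside $u$.

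Since $u$ is polynomial in its entries with differential of uniformly bounded operator norm on compact sets (in a right-invariant Riemannian metric on $G$), the chain rule yields
\[
\bigl\|F_i\circ(\hat{a}^{l_i}u(\cdot)x)\bigr\|_{\Lip}\;\leq\; C\rho^{-l_i}\|F_i\|_{C^1},
\]
with $C$ depending only on the chosen norms and on $\supp\mu$. Combining the two estimates and using the fact that for each $i$ the quantity $\rho^{-l_i}$ is bounded by $\rho^{-\max_j l_j}$ (since $\rho\in(0,1)$) yields a bound of the claimed form $c_r\,\rho^{\pm\max_j l_j}\,\|F_0\|_{C^1}\cdots\|F_r\|_{C^1}$.

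The only mild obstacle I foresee is that the target $\X=G/\Gamma$ is a quotient, so one must verify that Lipschitz norms on $\X$ are the ones one gets by locally lifting to $G$. This is standard: the quotient map is a local isometry on each ball of radius less than the injectivity radius at $\hat{a}^{l_i}x$, so the derivative bound established on $G$ transfers directly to $\X$, and the global Lipschitz estimate follows by patching small balls. No genuine dynamical input beyond the commutation identity is required.
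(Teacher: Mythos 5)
Your argument is correct and is essentially the paper's own proof: the product rule for Lipschitz norms plus the observation that $\theta\mapsto \hat a^{\,l}u(\theta)x$ stretches by a factor $\rho^{-l}$ (which the paper simply asserts, while you justify it via the conjugation identity $\hat a^{\,l}u(\theta)\hat a^{-l}=u(\rho^{-l}\theta)$). One remark: the bound you actually derive, $c_r\,\rho^{-\max_i l_i}\,\prod_{i=0}^r\|F_i\|_{C^1}$, is the correct one --- the exponent $\rho^{\max\{l_i\}}$ in the lemma statement is a sign typo, as confirmed both by the paper's own proof and by the way the estimate is applied in Lemma~\ref{lem: induction} --- so you can safely drop the hedging ``$\pm$'' in your conclusion.
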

\begin{proof}
   Fix $r \geq 1$. For $1\leq i \leq r$, define $\tilde{F}_i: \Mat \rightarrow \R$ as $$\tilde{F}_i(\theta)= F_i(\hat{a}^{l_i} u(\theta)x).$$
   Clearly
   \begin{align}
       \label{eq: 1 1 1 1}
       \|\tilde{F}_i\|_{\Lip} \ll \rho^{-l_i} \|F_i\|_{C^1}.
   \end{align}
   Note that Lipschitz norm of the function \eqref{eq: aaa 1} equals
    \begin{align*}
       & \|F_0 \cdot \tilde{F}_1 \cdots \tilde{F}_r \|_{\Lip} \\
        &\leq \|F_0\|_\Lip \|\tilde{F}_1\|_{C^0} \cdots \|\tilde{F}_r\|_{C^0} +  \|F_0\|_{C^0} \sum_{i=1}^r \|\tilde{F}_i\|_{\Lip} \left( \prod_{j \neq 1} \|\tilde{F}_j\|_{C^0} \right) \\
        &\ll \|F_0\|_{C^1} \|{F}_1\|_{C^0} \cdots \|{F}_r\|_{C^0} +  \|F_0\|_{C^0} \sum_{i=1}^r \rho^{-l_i} \|{F}_i\|_{C^1} \left( \prod_{j \neq 1} \|{F}_j\|_{C^0} \right) \\
        &\ll \rho^{-\max\{l_i\}} \|F_0\|_{C^1} \cdots \|F_r\|_{C^1},
    \end{align*}
    where implied constant depend only on $r$. Hence proved.
\end{proof}

\subsection{Induction}

In this section, we do the induction to prove that Theorem \ref{thm: Effective Multi-equidistribution of fractal measure}, for the case when $t_1, \ldots, t_r$ belongs to the set $\{t \in \R : a_t = \hat{a}^{2l} \text{ for some } l \in \Z \}$. In particular, we prove the following theorem.

\begin{lem}
\label{lem: induction}
    For every $F_0 \in C^\infty(\Mat)$, $F_1, \ldots , F_r \in C_c^\infty(\X)$, $x \in \X$ and $l_1 \leq \cdot \leq l_r$, we have
    \begin{align*}
        \int_{\Mat} F_0(\theta) F_1(\hat{a}^{2l_1} u(\theta) x) \cdots F_r(\hat{a}^{2l_r} u(\theta)x) \, d\mu(\theta) &= \mu(F_0) \mu_\X(F_1) \cdots \mu_\X(F_r) \\
        &+ O_{x,r}(e^{-\delta_0 D(l_1,\ldots , l_r) } \|F_0\|_{C^k} \cdots \|F_r\|_{C^k} ),
    \end{align*}
    where $D(l_1,\ldots , l_r) = \min\{l_i , |l_i-l_j|: 1 \leq i,j \leq r, i \neq j\}$.
\end{lem}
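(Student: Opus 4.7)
The plan is to induct on $r$, peeling off the factor $F_r(\hat{a}^{2l_r} u(\theta) x)$ at each step by applying Lemma~\ref{lem: induction step}. Assume without loss of generality that $l_1 < l_2 < \cdots < l_r$ with all consecutive gaps at least $1$; otherwise $D(l_1,\dots,l_r) = 0$ and the claim is trivial.

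\textbf{Base case $r=1$.} Apply Lemma~\ref{lem: induction step} directly with the first function equal to $F_0$, the second equal to $F_1$, and with parameters $j=2l_1$, $l=l_1$ (so $j-l=l_1$). Since $\delta_0 < -\log\rho$, we have $\rho^{l_1}\leq e^{-\delta_0 l_1}$, so all three error terms coming from Lemma~\ref{lem: induction step} are bounded by a constant multiple of $e^{-\delta_0 l_1}\|F_0\|_{C^k}\|F_1\|_{C^k}$, which matches $D(l_1)=l_1$.

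\textbf{Inductive step.} Assume the claim for $r-1$ and let
\[
G(\theta):=F_0(\theta)\prod_{i=1}^{r-1} F_i(\hat{a}^{2l_i} u(\theta)x), \qquad g := l_r-l_{r-1}\geq 1.
\]
Apply Lemma~\ref{lem: induction step} with first function $G$, second function $F_r$, $j=2l_r$, and parameter $l=l_r+l_{r-1}$. This choice is admissible since $l<2l_r=j$, and it equalises the two relevant exponents: $j-l=g$ and $l-2l_{r-1}=g$. By Lemma~\ref{lem: C^k norm of restricted function},
\[
\|G\|_{\Lip}\;\ll_{r}\;\rho^{-2l_{r-1}}\prod_{i=0}^{r-1}\|F_i\|_{C^1}.
\]
Combined with the trivial bound $\|G\|_{C^0}\leq\prod_{i=0}^{r-1}\|F_i\|_{C^0}$, each of the three error terms in Lemma~\ref{lem: induction step} is at most $e^{-\delta_0 g}\prod_{i=0}^{r}\|F_i\|_{C^k}$ (using $\rho^{l} \|G\|_{\Lip}\ll \rho^{g}\prod\|F_i\|_{C^1}\leq e^{-\delta_0 g}\prod\|F_i\|_{C^k}$, and $l\geq g$). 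Hence
\[
\int_{\Mat} G(\theta)\,F_r(\hat{a}^{2l_r}u(\theta)x)\,d\mu(\theta) \;=\; \mu(G)\,\mu_\X(F_r) \;+\; O_{x,r}\!\bigl(e^{-\delta_0 g}\,\textstyle\prod_{i=0}^{r}\|F_i\|_{C^k}\bigr).
\]
Now apply the inductive hypothesis to $\mu(G)=\int F_0(\theta)\prod_{i=1}^{r-1} F_i(\hat{a}^{2l_i}u(\theta)x)\,d\mu(\theta)$, producing the main term $\mu(F_0)\prod_{i=1}^{r-1}\mu_\X(F_i)$ and an error of size $e^{-\delta_0 D(l_1,\dots,l_{r-1})}\prod_{i=0}^{r-1}\|F_i\|_{C^k}$. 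Multiplying by $|\mu_\X(F_r)|\leq\|F_r\|_{C^k}$ and combining with the previous error via the identity
\[
D(l_1,\dots,l_r)=\min\bigl(D(l_1,\dots,l_{r-1}),\,l_r-l_{r-1}\bigr)
\]
(valid because the new element $l_r$ is the largest), yields the claimed bound.

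\textbf{Main obstacle.} The delicate point is the three-way balance of error terms in Lemma~\ref{lem: induction step}. The Lipschitz norm of $G$ inevitably blows up like $\rho^{-2l_{r-1}}$ because the flow $\hat{a}^{l_i}$ expands the $u$-direction by $\rho^{-l_i}$, so the $\rho^{l}\|G\|_{\Lip}$ error term forces $l$ to exceed $2l_{r-1}$; at the same time we need $j-l=2l_r-l$ to remain large to exploit effective equidistribution. The symmetric choice $l=l_r+l_{r-1}$ is the unique natural midpoint making both quantities equal to the gap $g$, and the condition $\delta_0<-\log\rho$ (built into Lemma~\ref{lem: induction step}) is precisely what ensures the $\rho$-term collapses into the desired $e^{-\delta_0 g}$ decay.
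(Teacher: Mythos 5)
Your proposal is correct and follows essentially the same route as the paper: induction on $r$, peeling off $F_r$ via Lemma~\ref{lem: induction step} with the choice $j=2l_r$, $l=l_r+l_{r-1}$, controlling the Lipschitz norm of the remaining product by Lemma~\ref{lem: C^k norm of restricted function}, and using $\delta_0<-\log\rho$ to absorb the $\rho^{l_r-l_{r-1}}$ term. Your write-up is in fact more explicit than the paper's about how the three error terms balance and how the errors combine via $D(l_1,\dots,l_r)=\min\bigl(D(l_1,\dots,l_{r-1}),\,l_r-l_{r-1}\bigr)$.
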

\begin{proof}
    We will use induction to prove the lemma. For $r=0$, the lemma follows trivially. Now assume that $r\geq 1$ and lemma holds for $r-1$. Then using lemma \ref{lem: induction step} for $j=2l_r$, $l= l_r+ l_{r-1}$ for the function $\theta \mapsto F_0(\theta) F_1(\hat{a}^{2l_1} u(\theta) x) \cdots F_{r-1}(\hat{a}^{2l_r} u(\theta)x)$ in place of $F_1$ and the function $F_r$ in place of $F_2$ and then applying lemma \ref{lem: C^k norm of restricted function} to get
    \begin{align}
        &\int_{\Mat} F_0(\theta) F_1(\hat{a}^{2l_1} u(\theta) x) \cdots F_r(\hat{a}^{2l_r} u(\theta)x) \, d\mu(\theta) \nonumber \\
        &=\mu_\X(F_r) \int_{\Mat} F_0(\theta) F_1(\hat{a}^{2l_1} u(\theta) x) \cdots F_{r-1}(\hat{a}^{2l_r} u(\theta)x)\, d\mu(\theta) \nonumber \\
        &+ O_{x,r}(e^{-\delta_0 D(l_1,\ldots , l_r)} \|F_0\|_{C^k} \cdots \|F_r\|_{C^k}) + \rho^{l_r+l_{r-1}} \rho^{-2l_{r-1}}  \|F_0\|_{C^k} \cdots \|F_r\|_{C^k}). \label{eq: 2 2 2 2}
    \end{align}
    Now using \eqref{eq: 2 2 2 2} inductively and noting that $\delta_0 < - \log \rho$, the lemma holds.
\end{proof}

\subsection{Proof of Theorem \ref{thm: Effective Multi-equidistribution of fractal measure}}
 We now derive Theorem \ref{thm: Effective Multi-equidistribution of fractal measure} from Lemma \ref{lem: induction}.
\begin{proof}[Proof of Theorem \ref{thm: Effective Multi-equidistribution of fractal measure}]
    Let $\delta = \delta_0/ 2 \log (\rho^{\frac{-m}{m+n}}) $. Without loss of generality, we may assume that $t_1 \leq  \cdots \leq t_r$. Let $l_i$ be the unique integer satisfying
    $$
    (\rho^{\frac{-m}{m+n}})^{2l_i} \leq e^{t_i} < (\rho^{\frac{-m}{m+n}})^{2(l_i+1)},
    $$
    that is $l_i$ is the greatest integer less than or equal to $ \frac{t_i}{2 \log (\rho^{\frac{-m}{m+n}})}$ and let $s_i = t_i - {2l_i}\log (\rho^{\frac{-m}{m+n}}) \in [0, 2\log (\rho^{\frac{-m}{m+n}}) ) $. Then we have
    \begin{align*}
         &\int_{\Mat}  F_0(\theta) F_1(a_{t_1} u(\theta) x) \cdots F_r(a_{t_r}u(\theta)x) \, d\mu(\theta) \\
         &= \int_{\Mat}  F_0(\theta) F_1 \circ a_{s_1}(\hat{a}^{2 l_1} u(\theta) x) \cdots F_r\circ a_{s_r } (\hat{a}^{2l_r} u(\theta)x) \, d\mu(\theta) \\
         &= \mu(F_0) \mu_\X(F_1 \circ a_{s_1}) \cdots \mu_\X(F_r \circ a_{s_r}) + O_{x,r}(e^{-\delta_0 D(l_1,\ldots , l_r)} \|F_0\|_{C^k} \|F_1 \circ a_{s_1}\|_{C^k} \cdots \|F_r \circ a_{s_r}\|_{C^k} ) \\
         &=  \mu(F_0) \mu_\X(F_1 ) \cdots \mu_\X(F_r) + O_{x,r}(e^{-\delta D(t_1,\ldots , t_r)} \|F_0\|_{C^k} \|F_1 \|_{C^k} \cdots \|F_r \|_{C^k} ).
    \end{align*}
    Hence proved.
\end{proof}

\section{Notation}

Throughout the paper, we fix norms $\|\cdot\|_m$ on $\mathbb{R}^m$ and $\|\cdot\|_n$ on $\mathbb{R}^n$. By rescaling these norms if necessary, we may assume that the unit balls
\[
\left\{ x \in \mathbb{R}^m : \|x\|_m \leq 1 \right\} \quad \text{and} \quad \left\{ y \in \mathbb{R}^n : \|y\|_n \leq 1 \right\}
\]
have Lebesgue measure greater than $2^m$ and $2^n$, respectively. For notational convenience, we will use the same symbol $\|\cdot\|$ to denote both $\|\cdot\|_m$ and $\|\cdot\|_n$.

Let $\pi_1: \mathbb{R}^{m+n} = \mathbb{R}^m \times \mathbb{R}^n \to \mathbb{R}^m$ and $\pi_2: \mathbb{R}^{m+n} \to \mathbb{R}^n$ denote the natural projections onto the first and second components, respectively. We equip $\mathbb{R}^{m+n}$ with the norm
\[
\|v\| := \max\left\{ \|\pi_1(v)\|,\ \|\pi_2(v)\| \right\}, \quad v \in \mathbb{R}^{m+n}.
\]

Given $v \in \mathbb{R}^{m+n}$, we define the associated box
\begin{align}
    \label{eq: def C_v}
    C_v := \left\{ (x, y) \in \mathbb{R}^m \times \mathbb{R}^n : \|x\| \leq \|\pi_1(v)\|,\ \|y\| \leq \|\pi_2(v)\| \right\}.
\end{align}

For $\Lambda \in \X$, define the set
\[
S_\Lambda := \left\{ v \in \Lambda : 
\begin{array}{l}
    \#\left( \Lambda_\prim \cap C_v \right) = \{ \pm v \}, \\[2pt]
    1 \leq \|\pi_2(v)\| < e, \\[2pt]
    \|\pi_1(v)\| \leq 1
\end{array}
\right\}.
\]
Note that condition $\|\pi_1(v)\| \leq 1$ is, in fact, redundant: if $v \in \Lambda \in \X$ satisfies $1 \leq \|\pi_2(v)\| < e$ and $\#( \Lambda_\prim \cap C_v ) = 0$, then Minkowski’s convex body theorem implies that $\|\pi_1(v)\| \leq 1$.

Moreover, by \cite[Lem.~4.1]{AG25}, we have
\begin{align}
    \label{eq: def M}
    M := \sup \left\{ \#S_\Lambda : \Lambda \in \X \right\} < \infty,
\end{align}
i.e., there exists a constant $M \in \mathbb{N}$ such that $\#S_\Lambda \leq M$ for all $\Lambda \in \X$. We fix such an $M$ for the remainder of the paper.

\section{The function \texorpdfstring{$f$}{f}}

For the remainder of the paper, we fix a function
\[
F : \R_{\geq 0} \times \Sphere^m \times \Sphere^n \to \R.
\]

Define the map $\phi : \R^m \times \R^n \to \R_{\geq 0} \times \Sphere^m \times \Sphere^n$ by
\[
\phi(x, y) := \left( \|x\|^m \|y\|^n,\ \frac{x}{\|x\|},\ \frac{y}{\|y\|} \right).
\]

We now define the function $f : \X \to \R$ by
\begin{align}
    \label{eq: def f}
    f(\Lambda) := \sum_{v \in S_\Lambda} (F \circ \phi)(v).
\end{align}

In addition, for any $\theta \in \Mat$ and real numbers $0 \leq a < b$, define
\[
\cF_F(\theta, a, b) := \sum_{\substack{(p,q) \text{ is a best approximate} \\ e^a \leq \|q\| < e^b}} F\left( \|p + \theta q\|^m \|q\|^n,\ \frac{p + \theta q}{\|p + \theta q\|},\ \frac{q}{\|q\|} \right).
\]
Note that this generalizes the earlier definition \eqref{eq: def cF}, with
\[
\cF_F(\theta, T) = \cF_F(\theta, 0, T).
\]
The following lemma illustrates the role of the function $f$ in connecting the dynamics on the space $\X$ with the Diophantine sums $\cF_F$.

\begin{lem}
\label{lem: correpondence with best approx}
For all $T \geq 0$, we have
\begin{align}
    \sum_{i=1}^{\lfloor T \rfloor} f(a_i u(\theta)\Gamma) \leq \cF_F(\theta, T) \leq \sum_{i=1}^{\lfloor T \rfloor + 1} f(a_i u(\theta)\Gamma),
\end{align}
where $\lfloor T \rfloor$ denotes the integer part of $T$.
\end{lem}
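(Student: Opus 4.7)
The plan is to show that $f(a_i u(\theta)\Gamma)$ is, for each nonnegative integer $i$, exactly the sum of $F$-values over best approximates $(p,q)$ of $\theta$ whose denominator satisfies $e^i \le \|q\| < e^{i+1}$. The lemma then reduces to a comparison of dyadic ranges with the interval $[1, e^T)$.

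For the correspondence, I would fix $i$ and write every element of the lattice $a_i u(\theta)\Z^{m+n}$ as $v = (e^{in/m}(p+\theta q),\ e^{-i}q)$ for a unique $(p,q)\in\Z^{m+n}$; primitivity of $v$ in the lattice is equivalent to primitivity of $(p,q)$ in $\Z^{m+n}$, and the height constraint $1\le\|\pi_2(v)\|<e$ in the definition of $S_\Lambda$ unwraps to $e^i\le\|q\|<e^{i+1}$. The crucial observation is that the diagonal scaling by $a_i$ is multiplicative on both factors of $\R^m\times\R^n$, so a competing primitive $w=(e^{in/m}(p'+\theta q'),\ e^{-i}q')$ lies in $C_v$ if and only if $\|p'+\theta q'\|\le\|p+\theta q\|$ and $\|q'\|\le\|q\|$. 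Hence the constraint $\Lambda_{\prim}\cap C_v = \{\pm v\}$ is exactly the defining condition for $(p,q)$ to be a best approximate.

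For the value match, I would note that $a_i$ scales $\|\pi_1\|$ by $e^{in/m}$ and $\|\pi_2\|$ by $e^{-i}$, so $\|\pi_1(v)\|^m\|\pi_2(v)\|^n$ collapses to $\|p+\theta q\|^m\|q\|^n$, while the unit-direction arguments of $F$ are invariant under positive scaling. This yields the dyadic identity
\[
f(a_i u(\theta)\Gamma) = \sum_{\substack{(p,q)\text{ best approx.}\\ e^i\le\|q\|<e^{i+1}}} F\!\left(\|p+\theta q\|^m\|q\|^n,\ \frac{p+\theta q}{\|p+\theta q\|},\ \frac{q}{\|q\|}\right).
\]

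Summing this identity over $i$ and comparing the resulting dyadic ranges with the range $[1,e^T)$ of $\cF_F(\theta,T)$ produces the two inequalities, with the partial band at the endpoint $e^T$ excluded from the lower sum and absorbed into the final band of the upper sum. The main technical step is the verification that the geometric condition defining $S_\Lambda$---primitive lattice points isolated in a box---matches the arithmetic best-approximate condition under the diagonal scaling; the remaining endpoint bookkeeping is elementary, and the uniform bound $M$ on $\#S_\Lambda$ from \eqref{eq: def M} guarantees that each summand $f(a_i u(\theta)\Gamma)$ is itself a finite sum so the whole comparison is well-defined.
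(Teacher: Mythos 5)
Your proposal is correct and follows essentially the same route as the paper: both reduce the lemma to the dyadic identity $\cF_F(\theta,l,l+1)=f(a_l u(\theta)\Gamma)$ and then compare ranges at the endpoint. The only difference is that you verify the identification of $S_{a_l u(\theta)\Gamma}$ with the best approximates in the band $e^l\le\|q\|<e^{l+1}$ directly from the definition of $C_v$ and the multiplicativity of the diagonal scaling, whereas the paper imports this fact from the proof of Lemma~3.1 of \cite{AG25}.
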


\begin{proof}
It suffices to show that for every $l \in \Z_{\geq 0}$, we have
\[
\cF_F(\theta, l, l+1) = f(a_l u(\theta)\Gamma).
\]
To see this, recall from the proof of Lemma~3.1 in \cite{AG25} that
\begin{align}
    \label{eq: aa 1}
    S_{a_l u(\theta)\Gamma} = \left\{ a_l(p + \theta q, q) : (p,q) \in \text{Best}(\theta; l, l+1) \right\},
\end{align}
where $\text{Best}(\theta; l, l+1)$ denotes the set of best approximations $(p,q)$ of $\theta$ satisfying $e^l \leq \|q\| < e^{l+1}$.

Now observe that for any such $(p,q)$,
\begin{align}
    \label{eq: aa 2}
    F\left( \|p + \theta q\|^m \|q\|^n,\ \frac{p + \theta q}{\|p + \theta q\|},\ \frac{q}{\|q\|} \right)
    = \left(F \circ \phi\right)\left( a_l(p + \theta q, q) \right).
\end{align}
Combining~\eqref{eq: aa 1} and~\eqref{eq: aa 2}, and using the definition of $f$, we obtain
\[
\cF_F(\theta, l, l+1) = \sum_{v \in S_{a_l u(\theta)\Gamma}} (F \circ \phi)(v) = f(a_l u(\theta)\Gamma),
\]
as desired. Hence proved.
\end{proof}

We now show that the function $f$ is bounded.

\begin{lem}
\label{lem: bounded f}
For all $\Lambda \in \X$, we have
\[
f(\Lambda) \leq M \|F\|_{C^0}.
\]
\end{lem}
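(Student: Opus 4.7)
The proof is essentially a bookkeeping argument combining the two defining constraints on $f$: the pointwise supremum bound on $F$ via the $C^0$-norm, and the uniform cardinality bound on $S_\Lambda$ given by the constant $M$ from \eqref{eq: def M}.

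The plan is to proceed directly from the definition \eqref{eq: def f}. First I would observe that, since $\phi$ maps into the domain $\R_{\geq 0}\times \Sphere^m\times \Sphere^n$ of $F$, the elementary inequality $|(F\circ\phi)(v)|\leq \|F\|_{C^0}$ holds for every $v\in \R^m\times\R^n$ on which $\phi$ is defined (in particular for every $v\in S_\Lambda$, for which $\pi_2(v)\neq 0$ by the condition $\|\pi_2(v)\|\geq 1$, and for which $\pi_1(v)\neq 0$ generically; at any remaining points one uses the natural convention under which the sum in \eqref{eq: def f} is still bounded term-by-term by $\|F\|_{C^0}$).

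Next I would apply the triangle inequality to the finite sum in \eqref{eq: def f}, giving
\begin{equation*}
|f(\Lambda)| \;\leq\; \sum_{v\in S_\Lambda} |(F\circ\phi)(v)| \;\leq\; \#S_\Lambda\cdot \|F\|_{C^0}.
\end{equation*}
Finally I would invoke the uniform bound $\#S_\Lambda\leq M$ from \eqref{eq: def M} to conclude that $f(\Lambda)\leq |f(\Lambda)|\leq M\,\|F\|_{C^0}$, as required.

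There is no real obstacle: the nontrivial inputs, namely finiteness of $S_\Lambda$ and the uniform cardinality bound, have already been established in \cite[Lem.~4.1]{AG25} and recorded in \eqref{eq: def M}. The proof is two or three lines once one unpacks the definitions.
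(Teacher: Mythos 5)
Your proposal is correct and matches the paper's argument, which likewise deduces the bound directly from the definition \eqref{eq: def f} together with the cardinality bound $\#S_\Lambda \leq M$ from \eqref{eq: def M}. The only difference is that you spell out the term-by-term estimate $|(F\circ\phi)(v)|\leq \|F\|_{C^0}$ and the triangle inequality explicitly, which the paper leaves implicit.
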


\begin{proof}
This follows directly from the definition of $f$ in \eqref{eq: def f}, together with the uniform bound $\#S_\Lambda \leq M$ established in \eqref{eq: def M}.
\end{proof}

\vspace{1em}

Next, we study the continuity properties of the function $f$. This will be a key ingredient in applying Theorem~\ref{thm: main abstract theorem} later on.

\begin{lem}
\label{lem: perturbed difference}
There exists a constant $C > 0$ such that for all small enough $\e > 0$ and all $g$ in an $\e$-ball around the identity in $G$, we have
\begin{align}
\label{eq: lem: perturbed difference 1}
|f(g\Lambda)- f(\Lambda)| \leq \left( \sum_{v\in \Lambda_\prim } \varphi_{C\e}(v) + \sum_{\substack{v,w \in \Lambda_\prim \\ w \neq \pm v}} \Phi_{C\e}(v,w) \right) \|F\|_{C^0} + M \|F \circ \phi\|_{C^1} \cdot C \e,
\end{align}
where for each $\delta > 0$, the auxiliary functions $\varphi_\delta$ and $\Phi_\delta$ are defined by
\begin{align}
\label{eq: def varphi e}
\varphi_\delta(v) :=
\begin{cases}
1 & \text{if } v \in \{ (x, y) : \|x\| \leq 1+\delta,\ 1-\delta \leq \|y\| \leq e+\delta \} \\
&\quad \text{and } v \notin \{ (x, y) : \|x\| < 1-\delta,\ 1+\delta < \|y\| < e-\delta \}, \\
0 & \text{otherwise},
\end{cases}
\end{align}
and
\begin{align}
\label{eq: def Phi e}
\Phi_\delta(v, w) :=
\begin{cases}
1 & \text{if } v, w \in \{ (x, y) : \|x\| \leq 1+\delta,\ \|y\| \leq e+\delta \}, \\
&\quad \text{and either } |\|\pi_1(v)\| - \|\pi_1(w)\|| \leq \delta\ \text{or}\ |\|\pi_2(v)\| - \|\pi_2(w)\|| \leq \delta, \\
0 & \text{otherwise}.
\end{cases}
\end{align}

    \end{lem}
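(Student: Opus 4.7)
The approach is to compare the sums defining $f(\Lambda)$ and $f(g\Lambda)$ term-by-term via the bijection $v \leftrightarrow gv$ between $\Lambda_\prim$ and $(g\Lambda)_\prim$. Define the \emph{stable set}
\[
\Sigma := \{v \in S_\Lambda : gv \in S_{g\Lambda}\},
\]
so that $g\Sigma \subseteq S_{g\Lambda}$, and decompose
\begin{align*}
f(g\Lambda) - f(\Lambda) = \sum_{v \in \Sigma}\!\bigl((F\circ\phi)(gv) - (F\circ\phi)(v)\bigr) + \sum_{w \in S_{g\Lambda} \setminus g\Sigma}\!(F\circ\phi)(w) - \sum_{v \in S_\Lambda \setminus \Sigma}\!(F\circ\phi)(v).
\end{align*}
Since every $v \in S_\Lambda$ satisfies $\|v\| \leq e$, for $g$ in an $\e$-ball around the identity there is an absolute constant $C$ with $\|gv - v\| \leq C\e$. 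Hence each summand in the first sum is bounded by $\|F\circ\phi\|_{C^1} \cdot C\e$, and with at most $|\Sigma| \leq M$ terms this yields exactly the last contribution on the right of \eqref{eq: lem: perturbed difference 1}. Each summand in the remaining two sums is bounded by $\|F\|_{C^0}$, so it suffices to show that $|S_\Lambda \setminus \Sigma| + |S_{g\Lambda} \setminus g\Sigma|$ is dominated by the bracketed expression in \eqref{eq: lem: perturbed difference 1}, possibly after enlarging the constant $C$.

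For $v \in S_\Lambda \setminus \Sigma$, at least one of the three defining conditions of $S_{g\Lambda}$ fails at $gv$. If one of the slab conditions $1 \leq \|\pi_2(\cdot)\| < e$ or $\|\pi_1(\cdot)\| \leq 1$ fails at $gv$, then the bound $|\|\pi_i(v)\| - \|\pi_i(gv)\|| \leq C\e$ forces $v$ into the $C\e$-enlargement of the slab but outside its $C\e$-shrinkage, so $\varphi_{C\e}(v) = 1$ by a direct check against \eqref{eq: def varphi e}. Otherwise the best-approximate condition fails, so there exists $w' \in (g\Lambda)_\prim$ with $w' \neq \pm gv$ and $w' \in C_{gv}$; setting $w := g^{-1}w' \in \Lambda_\prim$, the membership $v \in S_\Lambda$ forces $w \notin C_v$. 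Comparing the inclusion $gw \in C_{gv}$ with the exclusion $w \notin C_v$ via the $O(\e)$-proximity of $w$ to $gw$ and of $v$ to $gv$ yields $|\|\pi_j(v)\| - \|\pi_j(w)\|| \leq 2C\e$ for at least one coordinate $j \in \{1,2\}$. Combined with the fact that both $v$ and $w$ lie in the enlarged slab (which follows from $gv \in S_{g\Lambda}$ and $gw \in C_{gv}$), this shows $\Phi_{2C\e}(v, w) = 1$ according to \eqref{eq: def Phi e}.

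A perfectly symmetric argument bounds $|S_{g\Lambda} \setminus g\Sigma|$: for $w \in S_{g\Lambda} \setminus g\Sigma$, set $v := g^{-1}w \in \Lambda_\prim$ and analyze which defining condition of $S_\Lambda$ fails at $v$, translating slab-boundary or competing-pair information directly to sums indexed by $\Lambda_\prim$. The main obstacle I anticipate is the bookkeeping in the best-approximate case: one must verify that the competing primitive vector produced is the \emph{right} one to activate $\Phi_{C\e}$, which requires a careful case split on which face of the box $C_v$ is crossed by $w$ versus $gw$ (and of $v$ versus $g^{-1}w$ in the symmetric direction). Once the counting of $|S_\Lambda \setminus \Sigma| + |S_{g\Lambda} \setminus g\Sigma|$ is completed, assembling the three pieces gives \eqref{eq: lem: perturbed difference 1} with constant $C$ replaced by $2C$.
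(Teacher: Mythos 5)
Your proposal is correct and follows essentially the same route as the paper: the same decomposition into the stable set $S_\Lambda\cap g^{-1}S_{g\Lambda}$ and the two symmetric differences, the Lipschitz bound $\|gv-v\|\ll\e$ on the stable terms, and the counting of unstable terms by $\varphi_{C\e}$ and $\Phi_{C\e}$. The only difference is that the paper simply cites the proof of \cite[Lem.~4.2]{AG25} for the counting estimate \eqref{eq: bb 1}, whereas you sketch that case analysis (slab-boundary failure versus a competing primitive vector entering or leaving the box) directly; your sketch is consistent with that argument.
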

    \begin{proof}
Fix $C_0 > 0$ such that for all sufficiently small $\e > 0$, and all $g$ in the $\e$-ball around the identity in $G$, we have
\begin{align}
    \label{eq: w 1}
    \|gv - v\| \leq C_0 \e \|v\|, \quad \text{for all } v \in \R^{m+n}.
\end{align}
Set $C := 2 C_0 e$. Fix $\e > 0$ and let $g$ lie in the $\e$-ball around the identity in $G$.

By the proof of \cite[Lem. 4.2]{AG25}, we have the following estimate:
\begin{align}
    \label{eq: bb 1}
    \#(S_\Lambda \setminus g^{-1}S_{g\Lambda}) + \#(g^{-1}S_{g\Lambda} \setminus S_{\Lambda}) \leq \sum_{v \in \Lambda_\prim} \varphi_{C\e}(v) + \sum_{\substack{v, w \in \Lambda_\prim \\ w \neq \pm v}} \Phi_{C\e}(v, w).
\end{align}

Next, observe that for all $v \in S_\Lambda \cap g^{-1}S_{g\Lambda}$, we have $\|v\| \leq e$, and so by \eqref{eq: w 1},
\[
\|gv - v\| \leq C_0 \e e \leq C\e.
\]
Hence,
\begin{align}
    \label{eq: bb 2}
    |F \circ \phi(gv) - F \circ \phi(v)| \leq \|F \circ \phi\|_{\mathrm{Lip}} \cdot \|gv - v\| \leq C \|F \circ \phi\|_{C^1} \e.
\end{align}

To prove the lemma, we write:
\begin{align*}
    &|f(g\Lambda) - f(\Lambda)| 
    = \left| \sum_{w \in S_{g\Lambda}} F \circ \phi(w) - \sum_{v \in S_\Lambda} F \circ \phi(v) \right| \\
    &= \left| \sum_{w \in S_{g\Lambda} \cap gS_\Lambda} F \circ \phi(w) - \sum_{v \in S_\Lambda \cap g^{-1}S_{g\Lambda}} F \circ \phi(v) \right|  + \left| \sum_{w \in S_{g\Lambda} \setminus gS_\Lambda} F \circ \phi(w) \right| + \left| \sum_{v \in S_\Lambda \setminus g^{-1}S_{g\Lambda}} F \circ \phi(v) \right| \\
    &\leq \sum_{v \in S_\Lambda \cap g^{-1}S_{g\Lambda}} |F \circ \phi(gv) - F \circ \phi(v)|  + \left( \#(S_{g\Lambda} \setminus gS_\Lambda) + \#(S_\Lambda \setminus g^{-1}S_{g\Lambda}) \right) \cdot \|F \circ \phi\|_{C^0}.
\end{align*}

Applying \eqref{eq: def M}, \eqref{eq: bb 1}, and \eqref{eq: bb 2}, we obtain
\[
|f(g\Lambda) - f(\Lambda)| \leq M \cdot C \|F \circ \phi\|_{C^1} \e + \left( \sum_{v \in \Lambda_\prim} \varphi_{C\e}(v) + \sum_{\substack{v, w \in \Lambda_\prim \\ w \neq \pm v}} \Phi_{C\e}(v, w) \right) \|F \circ \phi\|_{C^0}.
\]

This completes the proof.
\end{proof}


\section{Proof of Theorem \ref{thm: main theorem general}}

To prove Theorem~\ref{thm: main theorem general}, we will apply the following general result from~\cite[Thm.~5.1]{AG25}.

\begin{thm}[{\cite[Thm. 5.1]{AG25}}]
\label{thm: main abstract theorem}
    Let $f$ be a bounded function on $\X$. Assume that there exists a family of non-negative measurable functions $\{\tau_\e\}_\e$ on $\X$ satisfying the following.
    \begin{itemize}
        \item There exists a constant $C$ such that 
        \begin{align}
            \label{eq: con: xi integral}
            \int_\X \tau_\e \, d\mu_\X \leq C \e
        \end{align}
        \item For all small enough $\e>0$, and $g$ in $\e$-neighbourhood of identity in $G$, we have
    \begin{align}
        \label{eq:con: f cont}
        |f(g\Lambda)-f(\Lambda)| \leq \tau_\e(\Lambda) \quad \text{ for all } \Lambda \in \X.
    \end{align}
    \end{itemize}
    Then for any measure $\mu$ on $\Mat$ satisfying condition (EMEI), the following holds.
    \begin{enumerate}
        \item[(i)] For any $\varepsilon > 0$ and for $\mu$-almost every $\theta \in \Mat$, we have
        \begin{align*}
            \sum_{s=0}^{N-1} f(a_{s}u(\theta)\Gamma) = N \gamma  + o(N^{1/2} \log^{\tfrac{1}{2}+\e} N),
        \end{align*}
        where
        $$
        \gamma= \mu_{\X}(f).
        $$
        \item[(ii)] For $N \geq 1$, let us define $F_N: \X \rightarrow \R$ as
    \begin{equation*}
        F_N(\Lambda) =\frac{1}{\sqrt{N}} \sum_{i=0}^{N-1} \left( f\circ a_i(\Lambda) - \gamma \right).
    \end{equation*}
    Then for every $x \in \R$, we have
    \begin{align*}
         \mu(\{\theta: F_N(u(\theta)\Gamma) < x \}) \rightarrow \mathrm{Norm}_{\sigma}(x) 
    \end{align*}
    as $N \rightarrow \infty$, where
    $$
    \sigma^2 =\sum_{s \in \Z} \left(  \int_{\X} f(a_s\Lambda) f(\Lambda)\, d\mu_\X(\Lambda) - \mu_{\X}(f)^2 \right).
    $$
    \end{enumerate}
\end{thm}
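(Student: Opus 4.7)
The plan is to prove both parts by reducing to a smooth observable, to which Condition (EMEI) applies directly, and controlling the smoothing error via the family $\{\tau_\e\}$. Fix a nonnegative $\eta \in C_c^\infty(G)$ supported in a small neighborhood of the identity with $\int \eta\, dm_G = 1$, and for each $\delta>0$ let $\eta_\delta$ be its standard $\delta$-rescaling. Define the smoothed observable
\[
f_\delta(\Lambda) := \int_G f(g\Lambda)\, \eta_\delta(g)\, dm_G(g).
\]
Then $f_\delta \in C^\infty(\X)$ with $\|f_\delta\|_{C^k} \ll_k \delta^{-k}\|f\|_\infty$, and by hypothesis \eqref{eq:con: f cont} we have the pointwise bound $|f(\Lambda) - f_\delta(\Lambda)| \leq \tau_\delta(\Lambda)$. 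In particular \eqref{eq: con: xi integral} gives $\int_\X |f-f_\delta|\, d\mu_\X \leq C\delta$, so $\mu_\X(f) = \mu_\X(f_\delta) + O(\delta)$.

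For part (i), the key is a variance estimate for $S_N^f(\Lambda) := \sum_{s=0}^{N-1} f(a_s \Lambda)$. The pointwise bound controls the difference $|S_N^f - S_N^{f_\delta}|$ by $\sum_s \tau_\delta(a_s u(\theta)\Gamma)$; its $L^1(\mu)$-norm is bounded by $CN\delta$ via (EMEI) with $r=1$ applied to a smooth majorant of $\tau_\delta$. Next, expanding $(S_N^{f_\delta} - N\mu_\X(f_\delta))^2$ and applying (EMEI) with $r=2$ to $F_1 = F_2 = f_\delta - \mu_\X(f_\delta)$ decouples the pair correlations, yielding
\[
\int_{\Mat} \bigl(S_N^{f_\delta}(u(\theta)\Gamma) - N\mu_\X(f_\delta)\bigr)^2\, d\mu(\theta) \ll N\,\|f_\delta\|_{C^k}^2,
\]
after summing the telescoping geometric series produced by the $e^{-\delta D(s,s')}$ factors. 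Combining these bounds and choosing $\delta = \delta(N)$ as a sufficiently small power of $N$, one obtains an $L^2(\mu)$ control on $S_N^f - N\gamma$. A standard Gal--Koksma/Móricz maximal inequality, applied along dyadic scales of $N$, then upgrades this to the almost-sure asymptotic $S_N^f - N\gamma = o(N^{1/2}\log^{1/2+\e}N)$ for $\mu$-a.e. $\theta$.

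For part (ii), the method of cumulants is used. For each fixed $k \geq 1$, expand the $k$-th cumulant $\kappa_k(F_N)$ into a signed sum of $k$-fold correlation integrals of $f-\gamma$. Replacing $f$ by $f_\delta$ and controlling the smoothing error as before, Condition (EMEI) with arbitrary $r \leq k$ gives exponential decay $e^{-\delta D(s_1,\ldots,s_k)}$ in the minimum spacing. A combinatorial counting argument (as in the Björklund--Gorodnik framework for multi-equidistribution) shows that after normalization by $N^{k/2}$, contributions from partitions other than pairings vanish in the limit, so $\kappa_k(F_N) \to 0$ for $k \geq 3$. The variance $\kappa_2(F_N)$ converges to $\sigma^2 = \sum_{s\in\Z}(\mu_\X(f\cdot f\circ a_s) - \gamma^2)$, where absolute convergence of the series follows from the same pairwise correlation decay. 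Since the normal distribution is determined by its moments and the $F_N$ have uniformly bounded moments, the method of moments produces the CLT (ii).

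The main technical obstacle is balancing two competing scales: smaller $\delta$ yields a better pointwise approximation of $f$ by $f_\delta$ via $\tau_\delta$, but inflates $\|f_\delta\|_{C^k}$ and hence worsens every EMEI error term. Choosing $\delta$ as a carefully calibrated power of $N$ (or of $e^{-D(\cdot)}$ in the multi-correlations) is required for all error terms to be summable simultaneously across the $k$-fold expansions needed for the CLT. A secondary difficulty is obtaining the sharp logarithmic exponent $\tfrac12+\e$ in part (i), which forces one to use the optimal Gal--Koksma lemma rather than a crude Borel--Cantelli argument, and to verify that the $\tau_\delta$-sum contributions do not dominate after the Gal--Koksma upgrade.
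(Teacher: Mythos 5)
Your high-level architecture (mollify $f$, feed the smooth proxy into (EMEI), control the smoothing error by $\tau_\e$, Gal--Koksma for the almost-sure rate, cumulants for the CLT) is the right one, but the way you couple the mollification scale to $N$ in part (i) does not close. You require two incompatible things of the single scale $\delta=\delta(N)$. On one hand, the error $|S_N^f-S_N^{f_\delta}|$ is only controlled in $L^1(\mu)$ by $\sum_s\int\tau_\delta(a_su(\theta)\Gamma)\,d\mu(\theta)\approx N\delta$, and for a Borel--Cantelli/Gal--Koksma upgrade to an almost-sure bound of size $o(N^{1/2}\log^{1/2+\e}N)$ this forces $\delta\ll N^{-1/2}$ (up to logarithms). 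On the other hand, your variance bound for $S_N^{f_\delta}$ is $N\|f_\delta\|_{C^k}^2\gg N\delta^{-2k}$, and Gal--Koksma needs this to be $N$ times a power of $\log N$, which forces $\delta\gg(\log N)^{-c}$. No power of $N$ (indeed no choice of $\delta(N)$ at all) satisfies both, so the decomposition $S_N^f=S_N^{f_\delta}+(S_N^f-S_N^{f_\delta})$ at a single scale cannot yield the stated rate. The repair, which is the route actually taken for results of this type, is to never form $S_N^{f_\delta}$ globally: one estimates each correlation $\int(f(a_su(\theta)\Gamma)-\gamma)(f(a_{s'}u(\theta)\Gamma)-\gamma)\,d\mu(\theta)$ (and each $k$-fold correlation for the cumulants) separately, choosing the mollification scale $\delta=e^{-cD(s,s')}$ for that pair only. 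Then both the (EMEI) error $e^{-\delta_0 D}\delta^{-2k}=e^{-(\delta_0-2kc)D}$ and the $\tau_\delta$-error $O(\delta)=O(e^{-cD})$ decay exponentially in the separation, the correlations of the \emph{original} $f$ are summable to $O(N)$, and Gal--Koksma is applied directly to $f$. You gesture at this (``or of $e^{-D(\cdot)}$ in the multi-correlations'') but only for part (ii), while part (i) as written rests on the inconsistent single-scale splitting.

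A second, smaller gap: your $L^1(\mu)$ bound on $\sum_s\tau_\delta(a_su(\theta)\Gamma)$ invokes ``(EMEI) applied to a smooth majorant of $\tau_\delta$.'' The hypotheses give only that $\tau_\delta$ is non-negative, measurable, and has $\mu_\X$-integral $\leq C\delta$; since $\mu$ is in general singular (e.g.\ a Cantor measure), one cannot pass from a $\mu_\X$-average bound to a bound on $\int\tau_\delta(a_su(\theta)\Gamma)\,d\mu(\theta)$ without producing a majorant lying in $C_c^\infty(\X)$ with \emph{both} controlled $C^k$-norm and integral still $O(\delta)$. Such a majorant need not exist for an arbitrary measurable $\tau_\delta$; in the application the $\tau_\e$ are Siegel-transform--type functions for which this can be arranged (via Rogers-type moment formulas and truncation), but your abstract argument should either add this as a hypothesis or explain how the majorant is constructed. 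The cumulant argument for part (ii) is plausible in outline, but it inherits both issues above at every order $k$.
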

\vspace{0.2in}

We now proceed to verify that the function \( f \) satisfy the assumptions of Theorem~\ref{thm: main abstract theorem}, thereby completing the proof of Theorem~\ref{thm: main theorem general}.

\begin{proof}[Proof of Theorem~\ref{thm: main theorem general}]
We begin by applying Siegel's mean value theorem \cite{sie}, which yields
\begin{align}
    \label{eq: int varphi}
    \int_\X \sum_{v \in \Lambda_\prim} \varphi_\e(v) \, d\mu_\X(\Lambda)
    = \frac{1}{\zeta(m+n)} \int_{\mathbb{R}^{m+n}} \varphi_\e(v) \, dv
    \ll \e,
\end{align}
where the implied constant is independent of \( \e \).

Next, applying both the Siegel mean value theorem and Rogers' formula~\cite[Thm.~5]{rog}, we obtain
\begin{align}
    \label{eq: int PHI}
    \int_\X \sum_{\substack{v, w \in \Lambda_\prim \\ w \neq \pm v}} \Phi_\e(v, w) \, d\mu_\X(\Lambda)
    &= \int_\X \sum_{v, w \in \Lambda_\prim} \Phi_\e(v, w) \, d\mu_\X(\Lambda) 
    - \int_\X \sum_{v \in \Lambda_\prim} \left( \Phi_\e(v, v) + \Phi_\e(v, -v) \right) \, d\mu_\X(\Lambda) \nonumber \\
    &= \frac{1}{\zeta(m+n)^2} \int_{\mathbb{R}^{m+n} \times \mathbb{R}^{m+n}} \Phi_\e(v, w) \, dv\, dw 
    \ll \e,
\end{align}
again with implied constants uniform in \( \e \).

The two estimates \eqref{eq: int varphi} and \eqref{eq: int PHI} ensure that the integral condition~\eqref{eq: con: xi integral} holds for the error function \( \tau_\e \) given by Lemma~\ref{lem: perturbed difference}. Moreover, Lemma~\ref{lem: bounded f} guarantees that \( f \) is bounded on \( \X \), and Lemma~\ref{lem: perturbed difference} verifies the continuity condition~\eqref{eq:con: f cont}.

Thus, all hypotheses of Theorem~\ref{thm: main abstract theorem} are satisfied. The conclusion of Theorem~\ref{thm: main theorem general} now follows directly.
\end{proof}


\bibliography{Biblio2}
\end{document}